\newtheorem{dref}{Definition}[section] 
\newtheorem{theo}[dref]{Theorem} 
\newtheorem{prop}[dref]{Proposition}
\newtheorem{remark}[dref]{Remark} 
\newtheorem{cor}[dref]{Corollary}
\def\pmtwo#1#2#3#4{\left( \begin{array}{cc}#1&#2\\#3&#4\end{array}\right)}
\def\dbar{\overline{\partial}}
\begin{document}

\title[]{Large $|k|$ behavior for the  reflection coefficient for 
Davey-Stewartson II equations}
\author{Christian Klein}
\address{Institut de Math\'ematiques de Bourgogne, UMR 5584\\
                Universit\'e de Bourgogne-Franche-Comt\'e, 9 avenue Alain Savary, 21078 Dijon
                Cedex, France\\
				Institut Universitaire de France\\
    E-mail Christian.Klein@u-bourgogne.fr}
    
\author{Johannes Sj\"ostrand}
\address{Institut de Math\'ematiques de Bourgogne, UMR 5584\\
                Universit\'e de Bourgogne-Franche-Comt\'e, 9 avenue Alain Savary, 21078 Dijon
                Cedex, France\\
    E-mail Johannes.Sjostrand@u-bourgogne.fr}
    
\author{Nikola Stoilov}
\address{Institut de Math\'ematiques de Bourgogne, UMR 5584\\
                Universit\'e de Bourgogne-Franche-Comt\'e, 9 avenue Alain Savary, 21078 Dijon
                Cedex, France \\ 
                and  
Laboratoire Jaques-Louis Lions, UMR 7598  \\
Faculte des Sciences er Ingeni\'ere, Sorbonne Universit\'e, 4 Place Jussieu 75005 Paris, France \\        
    E-mail Nikola.Stoilov@ljll.math.upmc.fr}
\date{\today}
\begin{abstract}
The study of complex geometric optics solutions to a system of d-bar equations appearing
in the context of electrical impedance tomography and the scattering theory of
the integrable Davey-Stewartson II equations for large values of the
spectral parameter $k$ in \cite{KlSjSt20} is extended to the 
reflection coefficient. For the case 
of potentials $q$ with compact support on some domain $\Omega$ with 
smooth strictly convex boundary, improved asymptotic relations are 
provided.  
\end{abstract}

\thanks{This work is partially supported by 
the ANR-FWF project ANuI - ANR-17-CE40-0035, the isite BFC project 
NAANoD, the EIPHI Graduate School (contract ANR-17-EURE-0002) and by the 
European Union Horizon 2020 research and innovation program under the 
Marie Sklodowska-Curie RISE 2017 grant agreement no. 778010 IPaDEGAN}

\maketitle
\tableofcontents

\section{Introduction}
\setcounter{equation}{0}

This paper  addresses the scattering problem for the 
integrable Davey-Stewartson (DS) II equation given by the Dirac system 
\begin{equation}\label{dbarphi}
  \begin{cases}
    \overline{\partial}\phi_{1}=\frac{1}{2}q\mathrm{e}^{\overline{kz}-kz}\phi_{2},\\
    \partial\phi_{2}=\sigma\frac{1}{2}\overline{q}\mathrm{e}^{kz-\overline{kz}}\phi_{1},\quad \sigma=\pm1,
\end{cases}   
\end{equation}
subject to the asymptotic conditions 
\begin{equation}
    \lim_{|z|\to\infty}\phi_{1}=1,\quad \lim_{|z|\to\infty}\phi_{2}=0,
    \label{Phisasym}
\end{equation}
where  $q=q(x,y)$ is a complex-valued field, where the \emph{spectral 
parameter} $k\in\mathbb{C}$ is independent of $z=x+\mathrm{i} y$, 
$(x,y)\in \mathbb{R}^{2}$, and 
where 
\begin{equation*}
\partial:=\frac{1}{2}\left(\frac{\partial}{\partial x}-\mathrm{i}\frac{\partial}{\partial y}\right)\quad\text{and}\quad
\bar{\partial}:=\frac{1}{2}\left(\frac{\partial}{\partial 
x}+\mathrm{i}\frac{\partial}{\partial y}\right).
%\label{eq:d-dbar}
\end{equation*}
The functions $\phi_{i}(z;k)$, $i=1,2$  depend on $z$ and $k$, and are 
called \emph{complex 
geometric optics} (CGO) solutions. Note 
that  they need not be holomorphic in either variable.

In addition to the DS II system, the CGO solutions
appear in the scattering theory of two-dimensional integrable 
equations as the Kadomtsev-Petviashvili and the Novikov-Veselov 
equation, see \cite{KSbook} for references, in 
\emph{electrical impedance tomography} (EIT), see \cite{Uhl,MS}, and Normal 
Matrix Models in Random Matrix Theory, see e.g.\ \cite{KM}.  
Our main interest in this paper is in the scattering data, the so-called  \emph{reflection 
coefficient} $R$ defined by 
\begin{equation}\label{reflc}
    \overline{R} = 
    \frac{2\sigma}{\pi}\int_{\mathbb{C}}^{}\mathrm{e}^{kz-\overline{kz}}\overline{q}(z)\phi_{1}(z;k)L(\mathrm{d}z),
\end{equation}
where $L(dz)$ is the Lebesgue measure on the complex plane. 

We are in 
particular interested in the case that the potential $q$ has compact support on some 
simply connected domain $\Omega\subset\mathbb{C}$ with a smooth boundary. 
This is a typical situation in EIT since the body of a patient has 
obviously compact support. As an example for such a situation, we 
study in this paper the case that $q$ is the characteristic function 
of the domain $\Omega$. In the context of DS II such a setting 
would correspond to a situation as in the seminal work by 
Gurevich and Pitaevski \cite{GP} for the Korteweg-de Vries (KdV) equation. 
For dispersive PDEs as DS and KdV, rapid modulated oscillations are 
expected in the 
vicinity of a discontinuity of the initial data $q$ called
\emph{dispersive shock waves} (DSW). A detailed study of this case for DS 
would allow more insight into the formation of DSWs for the DS II 
system. 

Since it is analytically difficult to solve 
d-bar systems explicitly, a large number of numerical approaches has been 
developed.  The most popular ones are based on discrete Fourier transforms 
applied to the solution of d-bar equations in terms of the
solid Cauchy transform, see  \cite{KnMuSi2004,MS}. 
The first approach along these lines with  an exponential decrease of the numerical error with the 
number of Fourier modes has been given in \cite{KM,KMS} for Schwartz  class potentials. 
A similar dependence of the error on the numerical resolutions could 
be achieved for potentials with compact support on a 
disk in \cite{KS19}. However, it is only possible to reach 
\emph{machine precision} (here $10^{-16}$) for values of $|k|\simeq 
1000$. Therefore the numerical approach \cite{KS19} was complemented in 
\cite{KlSjSt20} with explicit asymptotic formulae for large values of 
$|k|$  for potentials being the characteristic function of a compact 
domain. These results will be extended in the present paper to allow 
for sharper asymptotic results and explicit expressions for the 
reflection coefficient. 

\subsection{State of the art}
We briefly summarize the state of the art of the theory of the Dirac system 
(\ref{dbarphi}) and the results of 
\cite{KlSjSt20,KlSjSt20b} to be generalized in this paper. 
The question of existence and uniqueness of CGO solutions to system 
(\ref{dbarphi}) with $\sigma=1$ was studied in \cite{BC} for Schwartz class 
potentials and in \cite{Su1,Su2,Su3} for potentials  
$q\in L^{\infty}(\mathbb{C})\cap L^{1}(\mathbb{C})$ such that 
also $\hat{q}\in L^{\infty}(\mathbb{C})\cap L^{1}(\mathbb{C})$ 
where $\hat{q}$ is the Fourier transform of $q$ (the potentials have 
to satisfy a smallness condition in the focusing case $\sigma=-1$).
In \cite{BU} this was generalized respectively to real-valued, compactly supported potentials 
in $L^{p}(\mathbb{C})$ and in \cite{Perry2012} to potentials in 
$H^{1,1}(\mathbb{C})$ , and in \cite{NRT} to potentials 
in  $L^{2}(\mathbb{C})$.

One application of the system (\ref{dbarphi}) as shown in 
\cite{Fok,FA} is that it gives both the scattering and 
inverse scattering map for the  Davey-Stewartson II 
equation
\begin{equation}
\begin{split}
i q_t + (q_{xx}-q_{yy}) + 2\sigma(\Phi + |q|^2)q&=0,\\
\Phi_{xx}+\Phi_{yy} +2(|q|^2)_{xx}&=0,
\end{split}
    \label{eq:DSII}
\end{equation}
a two-dimensional nonlinear Schr\"odinger equation; DS is \emph{defocusing} 
for  $\sigma=1$, and \emph{focusing} for $\sigma=-1$. Note that DS systems 
appear in the modulational regime of many dispersive 
equations as for instance the water wave systems, see e.g.,  
\cite{KSbook} 
for a review on DS equations and a 
comprehensive list of references, and are only integrable for the 
choice of parameters in (\ref{eq:DSII}). 

The scattering data  are given in terms of the reflection coefficient $R=R(k)$ 
in (\ref{reflc}). The DS II equations (\ref{eq:DSII}) are completely 
integrable in the sense that a Lax pair exists, the first part of the 
Lax pair being (\ref{dbarphi}). Here $\phi_{1}$, $\phi_{2}$, $q$ can 
be seen as having a dependence on the physical time $t$ which is 
suppressed since it will not be studied in this paper. However, it 
will play a role in the second equation of the Lax pair. We put 
$\Theta=
\begin{pmatrix}
	\phi_{1}e^{kz} \\
	\phi_{2}e^{\overline{kz}}
\end{pmatrix}
$ and get for the Lax pair 
\begin{eqnarray}
\label{eq:specprob}
\Theta_{x} + i \sigma_{3} \Theta_{y} &=& \pmtwo{0}{q}{\overline{q}}{0} \Theta, \\
~\nonumber \\
\label{eq:TimeEvolve}
\Theta_{t} &=& \pmtwo
{i \dbar^{-1} \partial \left( |q|^{2} \right)/2}{-i \partial q}
{i \dbar \overline{q}} {-i \dbar^{-1} \partial \left( |q|^{2} 
\right)/2} \Theta \\
\nonumber && - \pmtwo{0}{q}{\overline{q}}{0} \Theta_{y} + i \sigma_{3}\Theta_{yy} \ ,
\end{eqnarray}
where $\sigma_{3}=\mbox{diag}(1,-1)$ is a Pauli matrix.

As $q$ in (\ref{eq:DSII}) evolves in time $t$, the reflection 
coefficient evolves  because of (\ref{eq:TimeEvolve}) by a trivial phase factor:
\begin{equation}
R(k;t)=R(k,0)e^{4i t\Re(k^2)}.
\end{equation}
The inverse scattering transform for DS II is then given by 
(\ref{dbarphi}) and
(\ref{Phisasym}) after replacing $q$ by $R$ and vice versa, the derivatives with 
respect to $z$ by the corresponding derivatives with respect to $k$, 
and asymptotic conditions for $k\to\infty$ instead of $z\to\infty$, 
see \cite{AF}.

The main interest in \cite{KlSjSt20,KlSjSt20b} and the present paper 
is in the case when $|k|$ is large, i.e., \( h 
:=1/|k|\) small. We 
introduce the following notation: 
\begin{equation*}%\label{omega}
kz-\overline{kz}=i|k|\Re (z\overline{\omega })=i|k|\langle z,\omega
\rangle_{\mathbb{R}^2},\ \ \omega= \frac{2i\overline{k}}{|k|},
  \end{equation*}
  and for $u\in L^{2}(\mathbb{R}^{2})$
  \begin{equation*}
\widehat{\tau}_{\omega}u = e^{kz-\overline{kz}}u, \quad \widehat{\tau}_{-\omega}u = e^{\overline{kz} - kz}u, \quad E = (h\overline{\partial})^{-1}, F = (h{\partial})^{-1}
\end{equation*}
which leads for (\ref{dbarphi}) to
\begin{equation*}\label{sy.8}
\begin{cases}
h\overline{\partial }\phi
_1=\widehat{\tau }_{-\omega }h(q/2)\phi _2,\\
h\partial \phi
_2=\sigma\widehat{\tau }_\omega h(\overline{q}/2)\phi _1.
\end{cases}
\end{equation*}

%This implies that we can write  (\ref{dbarphi}) in the  form ($\phi_{j}\in \langle \cdot \rangle^\epsilon L^2$, $\Psi _j\in \langle \cdot \rangle^{\epsilon -2}L^2$, $i=1,2$, for some $0<\epsilon \le 1$)
See \cite{KlSjSt20}, Section 2, for the precise choice of $E$, $F$. Looking for solutions of the form (\ref{decomp.1}) - (\ref{decomp.2}) below, leads to the inhomogeneous system 
\begin{equation*}%\label{sy.14int}
\begin{cases}
\phi^{1} _1-E\widehat{\tau }_{-\omega }\frac{hq}{2}\phi^{1} _2=Ef_1,\\
\phi^{1} _2-\sigma F\widehat{\tau }_\omega 
\frac{h\overline{q}}{2}\phi^{1} 
_1=Ff_2,
\end{cases}
\end{equation*}
with $f_{1}= 0$ and $f_{2}=\sigma\widehat{\tau}_\omega h\bar{q}/2$, 
or 
\begin{equation*}%\label{sy.15int}
(1-\mathcal{K})\begin{pmatrix}\phi^{1} _1\\\phi^{1}
  _2\end{pmatrix}=\begin{pmatrix}Ef _1\\Ff _2\end{pmatrix},
\end{equation*}
where
\begin{equation}\label{sy.16}
\mathcal{K}=\begin{pmatrix}0 &A\\ B &0\end{pmatrix},\ \ 
\begin{cases}A=E\widehat{\tau }_{-\omega }\frac{hq}{2},\\
B=\sigma F\widehat{\tau }_\omega \frac{h\overline{q}}{2}\ (=\sigma\overline{A})
\end{cases} .
\end{equation}

In (\cite{KlSjSt20}) we showed, 
\begin{prop}\label{sy1}
Let $q\in \langle \cdot \rangle^{-2}H^s$ for some $s\in ]1,2]$ and fix
$\epsilon \in ]0,1]$. Define $\mathcal{ K}$ as in (\ref{sy.16}). Then
$\mathcal{ K}=\mathcal{ O}(1): (\langle \cdot \rangle^\epsilon  L^2)^2\to
(\langle \cdot \rangle^\epsilon  L^2)^2 $,
$$
\mathcal{ K}^2=\mathcal{ O}(h^{s-1}):\, (\langle \cdot \rangle^\epsilon
L^2)^2\to (\langle \cdot \rangle^\epsilon  L^2)^2.
$$
For $h_0>0$ small enough and $0<h\le h_0$,
$1-\mathcal{ K}: (\langle \cdot \rangle^\epsilon  L^2)^2\to (\langle \cdot \rangle^\epsilon  L^2)^2$
has a uniformly bounded inverse,
\begin{equation}\label{sy.32.5}
  (1-\mathcal{ K})^{-1}=(1-\mathcal{ K}^2)^{-1}(1+\mathcal{ K})=
  \begin{pmatrix} (1-AB)^{-1} &0\\ 0
    &(1-BA)^{-1}\end{pmatrix} \begin{pmatrix}
1 &A\\ B &1
  \end{pmatrix}.
\end{equation}
When
$q$ is the characteristic function of  a
  bounded strictly convex domain with smooth boundary, the conclusions
  hold with $s = \frac{3}{2}$.
\end{prop}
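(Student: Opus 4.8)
The plan is to peel the matrix operator apart. Because $\mathcal K$ is off-diagonal, $\mathcal K^2=\mathrm{diag}(AB,BA)$, so $1-\mathcal K$ is invertible on $(\langle\cdot\rangle^\epsilon L^2)^2$ as soon as $1-AB$ and $1-BA$ are invertible on $\langle\cdot\rangle^\epsilon L^2$, and the purely algebraic identity $(1-\mathcal K)^{-1}=(1-\mathcal K^2)^{-1}(1+\mathcal K)$ then gives formula (\ref{sy.32.5}) after block multiplication. Hence everything reduces to the two estimates $A,B=\mathcal O(1)$ and $AB,BA=\mathcal O(h^{s-1})$ on $\langle\cdot\rangle^\epsilon L^2$, the second of which makes $1-AB=1+\mathcal O(h^{s-1})$ invertible by a Neumann series once $h\le h_0$.

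For the first estimate: in $A=E\widehat\tau_{-\omega}(hq/2)$ the phase $\widehat\tau_{-\omega}$ is a unitary multiplier; since $H^s\hookrightarrow L^\infty(\mathbb R^2)$ for $s>1$ (and trivially if $q=\chi_\Omega$), multiplication by $hq$ maps $\langle\cdot\rangle^\epsilon L^2$ into $h\,\langle\cdot\rangle^{\epsilon-2}L^2$; and $E=h^{-1}\bar\partial^{-1}$ is $h^{-1}$ times the solid Cauchy transform, which sends $\langle\cdot\rangle^{\epsilon-2}L^2$ boundedly into $\langle\cdot\rangle^{\epsilon-1}L^2\subset\langle\cdot\rangle^\epsilon L^2$ (this is where $\epsilon\le 1$ is used, to keep the intermediate weight $\le -1$, the range in which the weighted Cauchy estimate holds). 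The $h^{-1}$ cancels the $h$ in $hq$, so $A=\mathcal O(1)$ uniformly in $h$, and $B=\sigma\overline A$ similarly.

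For the second estimate I would bring out the oscillation hidden in the two phases. A direct computation gives $\widehat\tau_{-\omega}(h\partial)\widehat\tau_\omega=h(\partial+k)$, hence $\widehat\tau_{-\omega}F\widehat\tau_\omega=h^{-1}(\partial+k)^{-1}$; inserting $\widehat\tau_\omega\widehat\tau_{-\omega}=1$ and commuting multiplication operators, all explicit powers of $h$ cancel and one is left with
\begin{equation*}
AB=\tfrac{\sigma}{4}\,\bar\partial^{-1}\,q\,(\partial+k)^{-1}\,\bar q,
\end{equation*}
and symmetrically for $BA$; equivalently $AB$ is the oscillatory integral operator with kernel proportional to $(z-w)^{-1}(\bar w-\bar v)^{-1}q(w)\bar q(v)\,e^{i|k|\langle v-w,\omega\rangle}$, whose phase is linear and hence has no critical points. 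The smallness must therefore come entirely from the Faddeev-type resolvent $G_k=(\partial+k)^{-1}$, whose Fourier multiplier $(\tfrac i2\bar\zeta+k)^{-1}$ has size $\mathcal O(h)$ away from the circle $|\zeta|=2|k|$. I would split $G_k$ into its part supported away from that circle, contributing $\mathcal O(h)$ after being sandwiched between $\bar\partial^{-1}$ and the weights (all $\mathcal O(1)$), and its part near the circle, on which the operator norm of $q\,G_k\,\bar q$ is governed by the decay of $\widehat q$ transverse to $|\zeta|=2|k|$: the bound $\widehat q=\mathcal O(\langle\zeta\rangle^{-s})$ from $q\in H^s$ turns this into $\mathcal O(h^{s-1})$.

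For $q=\chi_\Omega$ one has $\chi_\Omega\notin H^{3/2}$, but the only place Sobolev regularity enters is through the decay of $\widehat q$ near the critical circle, and for the indicator of a bounded domain with smooth, strictly convex boundary stationary phase on $\partial\Omega$ yields $\widehat{\chi_\Omega}(\xi)=\mathcal O(|\xi|^{-3/2})$ — the positive curvature providing non-degenerate critical points of the boundary phase, exactly two for each direction. Substituting this pointwise decay for the $H^s$ bound runs the argument with effective smoothness $s=\tfrac32$, so $AB,BA=\mathcal O(h^{1/2})$ and the conclusions persist. I expect this last estimate to be the crux: making rigorous the heuristic that $G_k$ is ``$\mathcal O(h)$ off the circle $|\zeta|=2|k|$'' in the relevant weighted spaces, and in particular extracting the sharp power $h^{s-1}$ rather than a weaker one — this is the single place where oscillation is genuinely exploited and where, in the characteristic-function case, the convex geometry of $\partial\Omega$ has to be fed in through stationary phase.
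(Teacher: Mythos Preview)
The paper does not prove this proposition at all: it is quoted verbatim from the companion paper \cite{KlSjSt20} (note the line ``In \cite{KlSjSt20} we showed'' immediately preceding the statement). So there is no proof here to compare against, and I can only assess your sketch on its own merits and against what the present paper does with the \emph{characteristic-function} case in Section~\ref{AB}.

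Your algebraic reduction is fine, and the $\mathcal O(1)$ bound for $A$ is essentially the right argument (decay of $q$ plus the weighted mapping property of the solid Cauchy transform). Your formula $AB=\tfrac{\sigma}{4}\bar\partial^{-1}q(\partial+k)^{-1}\bar q$ is correct and is a reasonable starting point; the Faddeev resolvent $(\partial+k)^{-1}$ with its near-singular circle is indeed where the gain comes from. But the proposal stops exactly where the work begins: you yourself flag that turning ``$G_k=\mathcal O(h)$ off the circle, controlled by $\widehat q$ near the circle'' into a genuine $\mathcal O(h^{s-1})$ bound on the \emph{weighted} space $\langle\cdot\rangle^\epsilon L^2$ is the crux, and you do not carry it out. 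In particular, sandwiching the near-circle piece between multiplications by $q$ and $\bar q$ and extracting $h^{s-1}$ from the $H^s$ regularity of $q$ needs a concrete argument (trace-type or restriction-type estimate for the circle of radius $\asymp|k|$, combined with the outer $\bar\partial^{-1}$ and the weights), not just the pointwise decay of $\widehat q$. As written this is a plan, not a proof.

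It is worth noting that the present paper's Section~\ref{AB}, which treats only $q=1_\Omega$, takes a completely different route: instead of Fourier analysis of $(\partial+k)^{-1}$, it works directly with the integral kernel (\ref{1p3}) of $AB$, builds an adapted partition of unity $\chi_z+\chi_w+r_{z,w}=1$ (Proposition~\ref{AB1}), solves the transport-type equations (\ref{1p4})--(\ref{2p4}) to order $N$, and applies Stokes' formula to reduce to five explicit kernels $K_1,\dots,K_5$ in (\ref{1p8}), each estimated by hand. That physical-space method yields the sharper bound $AB=\mathcal O(1/|k|)$ of Theorem~\ref{AB2}, i.e.\ effective $s=2$ rather than the $s=3/2$ your Fourier heuristic recovers. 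So even for the last clause of the proposition your approach, if completed, would reproduce only the weaker cited result, whereas the paper's own contribution (Proposition~\ref{sy1new}) improves it by a different, kernel-based argument.
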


\subsection{Main results}

The main goal of this paper is to obtain improved asymptotics  for the 
reflection coefficient, in particular for the case of potentials with 
compact support. To this end we solve the system (\ref{dbarphi}) for $q\in \langle 
\cdot \rangle^{-2}H^s$ for some $s\in ]1,2]$ for small 
\( h\) in the form
\begin{equation}\label{decomp.1}
\phi _1 = \phi _1^0 + \phi_1^1,\quad \phi _2 = \phi _2^0 + 
\phi_2^1 .
\end{equation}
We start with
\begin{equation}\label{decomp.2}
\phi _1^0=1,\ \phi _2^0=0.
\end{equation}

%and the system (\ref{sy.13}) has a
%unique solution $(\phi _1,\phi _2)\in (\langle \cdot \rangle^\epsilon
%L^2)^2$ for every $(f _1,f _2)\in (\langle \cdot
%\rangle^{\epsilon -2}
%L^2)^2$, which also satisfies (\ref{sy.14}) and the uniform apriori
%estimate,
%$$
%\| (\phi _1,\phi _2)\|_{ (\langle \cdot \rangle^\epsilon
%L^2)^2}\le \mathcal{ O}(1/h)\| (f _1,f _2)\|_{(\langle \cdot
%\rangle^{\epsilon -2}
%L^2)^2}
%$$

%%%%N: section added for continuity of refrecences
%\par In order to correct for the remainder in (\ref{sy.12}), 
The functions $\phi_{1}^{1}$ and $\phi_{2}^{1}$ should satisfy
\begin{equation*}%\label{sy.13}
\begin{cases}
h\overline{\partial }\phi _1^{1}-\widehat{\tau }_{-\omega
}\frac{hq}{2}\phi _2^{1}=f _1,\\
h\partial \phi _2^{1}-\sigma\widehat{\tau }_{\omega
}\frac{h\overline{q}}{2}\phi _1^{1}=f _2,
\end{cases}
\end{equation*}
with $f_{1}=0$ and $f_{2}=\sigma \widehat{\tau}_{\omega}h\overline{q}/2$ which 
is $\mathcal{O}(h)$ in $\langle \cdot \rangle^{-2}L^2$. We look for 
$\phi _j^{1}\in
\langle \cdot \rangle^\epsilon L^2$ for $\epsilon\in
]0,1]$. %as in (\ref{sy.12.5}). 
This is equivalent to
\begin{equation*}%\label{sy.14}
\begin{cases}
\phi _1^{1}-E\widehat{\tau }_{-\omega }\frac{hq}{2}\phi _2^{1}=Ef _1,\\
\phi _2^{1}-\sigma F\widehat{\tau }_\omega 
\frac{h\overline{q}}{2}\phi _1^{1}=Ff _2,
\end{cases}
\end{equation*}
or 
\begin{equation*}%\label{sy.15}
(1-\mathcal{K})\begin{pmatrix}\phi _1^{1}\\\phi
  _2^{1}\end{pmatrix}=\begin{pmatrix}Ef _1\\Ff _2\end{pmatrix}.
\end{equation*}
% where
% \begin{equation}\label{sy.16}
% \mathcal{K}=\begin{pmatrix}0 &A\\ B &0\end{pmatrix},\ \ 
% \begin{cases}A=E\widehat{\tau }_{-\omega }\frac{hq}{2},\\
% B=\sigma F\widehat{\tau }_\omega \frac{h\overline{q}}{2}\ (=\sigma\overline{A})
% \end{cases} .
% \end{equation}
% %%%%%%%%%%%

% \begin{equation*}%T\label{sy.35}
%   \begin{cases}
%     \phi _1^1-E\widehat{\tau }_{-\omega }\frac{hq}{2}\phi _2^1=E0,\\
%     \phi _2^1-\sigma F\widehat{\tau }_\omega \frac{h\overline{q}}{2}\phi
%     _1^1=\sigma F\widehat{\tau }_\omega \frac{h\overline{q}}{2},
%   \end{cases}
% \end{equation*}
% i.e.
% \begin{equation*}%\label{sy.36}
% (1-\mathcal{ K})\begin{pmatrix}\phi _1^1\\ \phi
%   _2^1\end{pmatrix}=\begin{pmatrix}0\\
% \sigma F\widehat{\tau }_\omega \frac{h\overline{q}}{2}\end{pmatrix}.
% \end{equation*}
Here $Ff_2=\sigma F\widehat{\tau }_\omega h\overline{q}/2=\mathcal{O}(1)$ in $\langle
\cdot \rangle^\epsilon L^2$ and Proposition \ref{sy1} gives us a
unique solution in $(\langle \cdot \rangle^\epsilon L^2)^2$ which is
$\mathcal{O}(1)$ in that space. 
More precisely by (\ref{sy.32.5}), we 
get 
\begin{theo}
The system (\ref{sy.8}) has the solution (\ref{decomp.1}), (\ref{decomp.2}) with
\begin{equation}%
\label{sy.41}
\phi _1^1=(1-AB)^{-1}A\sigma F\widehat{\tau }_w
\frac{h\overline{q}}{2}= (1-AB)^{-1}AB(1)
%\\ &=A\sigma F\widehat{\tau }_w 
%\frac{h\overline{q}}{2}+\mathcal{ O}\left(h^{s-1}\|AF\widehat{\tau }_\omega
%h\overline{q}\|_{\langle \cdot \rangle^\epsilon L^2}\right)\hbox{ in }\langle \cdot
%\rangle^\epsilon L^2,
%\end{split}
\end{equation}
\begin{equation*}%\label{sy.42}
\begin{split}
\phi _2^1&=(1-BA)^{-1}\sigma F\widehat{\tau }_\omega
\frac{h\overline{q}}{2}
%\\ &=\sigma F\widehat{\tau }_\omega
%\frac{h\overline{q}}{2}
%+\mathcal{ O}(h^{s-1}\|F\widehat{\tau }_\omega
%h\overline{q}\|_{\langle \cdot
%\rangle^\epsilon L^2})\hbox{ in }\langle \cdot
%\rangle^\epsilon L^2,
\end{split}
\end{equation*}
where $\sigma F\widehat{\tau }_\omega
  h\overline{q}/2=B(1)$ and $A\sigma F\widehat{\tau }_\omega
  h\overline{q}/2=AB(1)$ are $\mathcal{ O}(1)$  in
$\langle \cdot \rangle^\epsilon L^2 $ by (\ref{sy.16}). 
Note that formally $\phi_{1}=(1-AB)^{-1}(1)$, $\phi_{2}=\phi_{2}^{1}$. 
\end{theo}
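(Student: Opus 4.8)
The statement is essentially a corollary of Proposition \ref{sy1} together with the block factorization \eqref{sy.32.5}, so the plan is to (i) record the reduction of the Dirac system to the integral equation, (ii) check that the source term is admissible so that Proposition \ref{sy1} applies, and (iii) read off the two components of $(1-\mathcal{K})^{-1}$ applied to that source. First I would substitute the ansatz \eqref{decomp.1}--\eqref{decomp.2} into \eqref{sy.8}. Since $h\overline{\partial}1=0$ and $h\partial 0=0$, the equations for $\phi_1^1,\phi_2^1$ become the inhomogeneous system displayed just above the theorem, with $f_1=0$ and $f_2=\sigma\widehat{\tau}_\omega h\overline{q}/2$. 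Applying the operators $E=(h\overline{\partial})^{-1}$ and $F=(h\partial)^{-1}$ in the precise sense fixed in \cite{KlSjSt20}, Section~2 (the normalisation that selects the solution lying in $\langle\cdot\rangle^{\epsilon}L^2$) turns this into
\begin{equation*}
(1-\mathcal{K})\begin{pmatrix}\phi_1^1\\ \phi_2^1\end{pmatrix}=\begin{pmatrix}Ef_1\\ Ff_2\end{pmatrix}=\begin{pmatrix}0\\ Ff_2\end{pmatrix},
\end{equation*}
and conversely any $(\phi_1^1,\phi_2^1)\in(\langle\cdot\rangle^{\epsilon}L^2)^2$ solving this solves \eqref{sy.8} with the stated asymptotics.

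Next I would verify admissibility of the source. Since $q\in\langle\cdot\rangle^{-2}H^s\subset\langle\cdot\rangle^{-2}L^2$ and $\widehat{\tau}_\omega$ is unitary on $L^2$, the term $f_2=\sigma\widehat{\tau}_\omega h\overline{q}/2$ is $\mathcal{O}(h)$ in $\langle\cdot\rangle^{-2}L^2$; the mapping properties of $F$ established in \cite{KlSjSt20} then give $Ff_2=\sigma F\widehat{\tau}_\omega h\overline{q}/2=\mathcal{O}(1)$ in $\langle\cdot\rangle^{\epsilon}L^2$. By the definition \eqref{sy.16} of $B$ this is exactly $B(1)$; note that although $1\notin L^2$, the combination $F\widehat{\tau}_\omega h\overline{q}$ does land in $\langle\cdot\rangle^{\epsilon}L^2$, and similarly $A$ maps that space to itself (as part of $\mathcal{K}=\mathcal{O}(1)$), so $AB(1)\in\langle\cdot\rangle^{\epsilon}L^2$ is $\mathcal{O}(1)$ as well. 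Hence Proposition \ref{sy1} applies with the relevant $s$ (with $s=3/2$ in the characteristic-function case) and furnishes, for $0<h\le h_0$, a \emph{unique} solution $(\phi_1^1,\phi_2^1)\in(\langle\cdot\rangle^{\epsilon}L^2)^2$, which is $\mathcal{O}(1)$ there.

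Finally I would apply the explicit inverse \eqref{sy.32.5} to $(0,B(1))^{t}$. Multiplying first by $\begin{pmatrix}1&A\\ B&1\end{pmatrix}$ gives $(AB(1),B(1))^{t}$, and then by $\mathrm{diag}\big((1-AB)^{-1},(1-BA)^{-1}\big)$ yields
\begin{equation*}
\phi_1^1=(1-AB)^{-1}AB(1)=(1-AB)^{-1}A\sigma F\widehat{\tau}_\omega\tfrac{h\overline{q}}{2},\qquad \phi_2^1=(1-BA)^{-1}B(1)=(1-BA)^{-1}\sigma F\widehat{\tau}_\omega\tfrac{h\overline{q}}{2},
\end{equation*}
which are the asserted formulas. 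For the concluding remark I would use the algebraic identity $(1-AB)^{-1}AB=(1-AB)^{-1}-1$, so that $\phi_1=\phi_1^0+\phi_1^1=1+(1-AB)^{-1}AB(1)=(1-AB)^{-1}(1)$ — rigorous once interpreted as $\phi_1-1=\phi_1^1\in\langle\cdot\rangle^{\epsilon}L^2$, and only \emph{formal} as written because $1\notin L^2$; the identity $\phi_2=\phi_2^1$ is immediate from $\phi_2^0=0$.

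There is no substantial obstacle here: all the analytic content sits in Proposition \ref{sy1}. The only points demanding care are the bookkeeping of the weighted spaces — in particular that the operators $E,F$ are the \emph{specific} inverses of \cite{KlSjSt20} that reproduce the normalisation $\phi_1^0=1$, and that $B(1)$ and $AB(1)$ genuinely belong to $\langle\cdot\rangle^{\epsilon}L^2$ despite $1$ not being square integrable — and the elementary block-matrix multiplication used to extract the two components of $(1-\mathcal{K})^{-1}(0,B(1))^{t}$.
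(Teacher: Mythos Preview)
Your proposal is correct and follows essentially the same route as the paper: substitute the ansatz \eqref{decomp.1}--\eqref{decomp.2} into \eqref{sy.8}, reduce to the integral equation $(1-\mathcal{K})(\phi_1^1,\phi_2^1)^{\mathrm{t}}=(0,Ff_2)^{\mathrm{t}}$, verify that $Ff_2=B(1)=\mathcal{O}(1)$ in $\langle\cdot\rangle^\epsilon L^2$, invoke Proposition~\ref{sy1} for existence and uniqueness, and then read off the components via the block factorization \eqref{sy.32.5}. The paper does precisely this in the paragraphs leading up to the theorem statement, and your additional remarks about why $B(1)$ makes sense despite $1\notin L^2$ and why the identity $\phi_1=(1-AB)^{-1}(1)$ is only formal are accurate elaborations.
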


On the way we improve the results of Prop.~\ref{sy1} 
for potentials $q$ being the characteristic function of a 
compact domain,
\begin{prop}\label{sy1new}
If
$q$ is the characteristic function of  a
  bounded strictly convex domain with smooth boundary, the 
  conclusions of Prop.~\ref{sy1}  
  hold with $s = 2$.
\end{prop}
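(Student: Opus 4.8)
The plan is to reduce everything to a single improved bound and then to a careful oscillatory–integral estimate. Among the conclusions of Proposition~\ref{sy1}, only the estimate $\mathcal K^2=\mathcal O(h^{s-1})$ on $(\langle\cdot\rangle^\epsilon L^2)^2$ actually depends on $s$: the bound $\mathcal K=\mathcal O(1)$ is $s$–independent, and the uniform invertibility of $1-\mathcal K$ together with the factorisation \eqref{sy.32.5} follows from the invertibility of $1-\mathcal K^2$, which holds for $h$ small as soon as $\|\mathcal K^2\|<1$. So it suffices to show, for $q=\chi_\Omega$ with $\Omega$ bounded, strictly convex, with smooth boundary, that $\mathcal K^2=\mathcal O(h)$ on $(\langle\cdot\rangle^\epsilon L^2)^2$. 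Since $\mathcal K^2=\mathrm{diag}(AB,BA)$ and $B=\sigma\overline A$, it is enough to estimate $AB$, the block $BA$ being treated symmetrically as its conjugate. Note that one cannot hope for more than $\mathcal O(h)$ here, since this is already the ceiling $s=2$ of Proposition~\ref{sy1} for genuinely smooth potentials.

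Next I would unwind $AB$ using the operators $E,F$ of \cite{KlSjSt20} and the fact that the oscillatory factors in $A$ and $B$ are reciprocal: commuting the multiplications $\widehat\tau_{\pm\omega}$ through $E$ and $F$ turns $AB$ into a composition of (regularised) solid Cauchy transforms with two cut‑offs to $\Omega$, in which the whole dependence on $|k|=1/h$ is carried by oscillatory integrals over $\Omega$ of the model type
\[
I(z,v)=\int_{\Omega}\frac{e^{i|k|\psi(w)}}{(z-w)(\overline w-\overline v)}\,L(dw),\qquad z,v\in\text{a fixed neighbourhood of }\overline\Omega ,
\]
where $\psi$ is the real affine phase coming from $kz-\overline{kz}=i|k|\langle z,\omega\rangle$, so that $|\nabla\psi|$ is a fixed positive constant and the phase is \emph{non‑stationary} everywhere, while the amplitude has the integrable singularities $w=z$ and $w=v$, both possibly in $\overline\Omega$ (one also needs the easier variants of $I$ with one or both amplitude factors dropped). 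It is this non‑stationarity that produces decay in $h$. The weighted $L^2$ bookkeeping away from $\overline\Omega$ is handled exactly as in the proof of Proposition~\ref{sy1}, using the mapping properties of the Cauchy transform; the new content is the sharp estimate of $I$. I emphasise that the $s=\tfrac32$ bound of \cite{KlSjSt20} essentially came from controlling $\chi_\Omega$ through the decay $|\widehat{\chi_\Omega}(\xi)|\lesssim\langle\xi\rangle^{-3/2}$ (the rate for strictly convex domains), which is lossy; the point is to estimate $I$ directly and geometrically instead.

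To estimate $I(z,v)$ I would first cut out two disks of radius $\delta\sim h$ centred at the amplitude singularities $w=z$ and $w=v$ and bound their contribution directly, using $\int_{D(\cdot,\delta)}|z-w|^{-1}\,L(dw)\sim\delta$; this is $\mathcal O(h)$. On the complement the amplitude is smooth, and I would integrate by parts against the non‑stationary phase: this produces a boundary term on $\partial\Omega$, two boundary terms on the cut‑out circles, and a less singular bulk integral. The circle terms are $\mathcal O(h)$, the bulk integral is iterated until its remainder is $\mathcal O(h^N)$ for every $N$, and the decisive term is the one on $\partial\Omega$, namely $\tfrac1{i|k|}\int_{\partial\Omega}(\partial_n\psi)\,a\,e^{i|k|\psi}\,dS$: because $\Omega$ is strictly convex with smooth boundary, the restriction of the affine phase to the curve $\partial\Omega$ has only finitely many critical points, all non‑degenerate, with second derivative comparable to the nonvanishing curvature times $|\nabla\psi|$, so one‑dimensional stationary phase on $\partial\Omega$ gains an extra $\mathcal O(|k|^{-1/2})$ and this term is $\mathcal O(|k|^{-3/2})=\mathcal O(h^{3/2})$. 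Collecting the pieces gives a pointwise bound on the kernel of $AB$ which, $\Omega$ being bounded and the kernel locally integrable in $z-v$, a Schur test converts into $AB=\mathcal O(h)$ on $\langle\cdot\rangle^\epsilon L^2$ --- a dyadic decomposition of the $z$‑variable removing any residual logarithmic loss.

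The main obstacle I foresee is uniformity in the degenerate configurations that the clean scheme above ignores: when $z$ or $v$ approaches $\partial\Omega$ the amplitude singularity collides with the boundary layer (and possibly with a critical point of $\psi|_{\partial\Omega}$), and when $z$ approaches $v$ the two amplitude singularities merge. In these regimes the integration‑by‑parts and stationary‑phase estimates must be replaced by careful local analysis --- splitting further according to the relative sizes of $\mathrm{dist}(w,\partial\Omega)$, $|w-z|$, $|w-v|$ and $h$ --- and one has to check that the surviving contributions are still $\mathcal O(h)$ as operators, using that the bad regions are small and that the resulting kernels remain integrable against the weights $\langle\cdot\rangle^\epsilon$. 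This is exactly where the hypotheses ``smooth'' and ``strictly convex'' are genuinely needed, and where the improvement from $s=\tfrac32$ to $s=2$ over \cite{KlSjSt20} is gained.
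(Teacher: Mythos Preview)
Your overall strategy---reduce to the kernel of $AB$, which is the oscillatory integral you call $I(z,v)$, and gain a factor of $h$ by integrating by parts against the non-stationary phase---is exactly what the paper does in Section~\ref{AB}. The differences are in execution, and they land precisely on the uniformity issues you flag at the end. Instead of excising disks of fixed radius $\sim h$ around the two amplitude singularities, the paper builds a smooth partition $1=\chi_z+\chi_w+r_{z,w}$ at the \emph{adaptive} scale $\widehat d(z,w,\zeta,k)=|z-w|+|k|^{-1}+|\zeta-\tfrac{z+w}{2}|$ (Proposition~\ref{AB1}); this absorbs the degeneration $z\to w$ automatically. It then exploits the holomorphic/antiholomorphic structure of the amplitude: since $\frac{1}{z-\zeta}$ is holomorphic and $\frac{1}{\bar\zeta-\bar w}$ antiholomorphic in $\zeta$, one writes the integrand as $\partial_{\bar\zeta}(e^{\cdot}\widetilde f)-\partial_\zeta(e^{\cdot}\widetilde g)$ up to $\delta$-contributions at $\zeta=z,w$ and a Neumann-series remainder, with $\widetilde f,\widetilde g$ supported where the dangerous factor is already controlled by $\widehat d$. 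Stokes' formula then produces five explicit pieces $K_1,\dots,K_5$ in \eqref{1p8}, each shown to give an operator of size $\mathcal O(h)$ by Schur-type estimates.

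Two of your intermediate claims would not survive as stated. First, with fixed-radius-$h$ cutoffs and a real directional IBP, the bulk cannot be ``iterated until its remainder is $\mathcal O(h^N)$'': each derivative worsens both amplitude singularities, and near the excised circles the $h$ gained from the phase is cancelled by the $h^{-1}$ lost from the derivative, so the bulk stays $\mathcal O(h)$ as an operator after any number of iterations, never better. It is the adaptive scale $\widehat d$ together with the complex $\partial_\zeta/\partial_{\bar\zeta}$ split (so that differentiating one singular factor produces a $\delta$, not a higher power) that makes the remainder $r^{N+1}=\mathcal O\big((|k|\widehat d)^{-N-1}\big)$ genuinely small and yields $A_1=\mathcal O(h^2)$. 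Second, your $\mathcal O(h^{3/2})$ for the $\partial\Omega$ contribution via one-dimensional stationary phase is not uniform: the boundary amplitude $\frac{1}{(z-\zeta)(\bar\zeta-\bar v)}\big|_{\zeta\in\partial\Omega}$ blows up whenever $z$ or $v$ approaches $\partial\Omega$ (possibly at a stationary point of the phase), so the stationary-phase constant diverges in exactly the configurations you worry about. The paper does not use stationary phase on this piece at all; it bounds it crudely by $\frac{C}{|k||z-w|}\big(G(z)+G(w)\big)$ with $G\sim 1+|\ln d(\cdot,\partial\Omega)|$ near $\partial\Omega$, and a Schur argument then gives $\mathcal O(h)$ on $L^p$, $1<p<\infty$---which is all that is required and is in fact the sharp order for the boundary term.
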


In the following we will  put $\sigma=1$ for the ease of 
presentation. When $\Omega\in \mathbb{C}$ is a bounded domain, let 
\begin{equation}
	D_{\Omega}(z)=\frac{1}{\pi}\int_{\Omega}^{}\frac{1}{z-w}L(dw),
	\quad z\in\mathbb{C}
	\label{Do}
\end{equation}
be the solution of the d-bar problem
\begin{equation}
	\begin{cases}
		\partial_{\bar{z}}D_{\Omega}=1_{\Omega}, &  \\
		D_{\Omega}(z)\to 0, &  z\to \infty.
	\end{cases}
	\label{Do2}
\end{equation}
The main theorem of this paper reads 
\begin{theo}\label{maintheorem}
	Let $\Omega\Subset \mathbb{C}$ be open with a strictly convex 
	smooth boundary, and let $iu(w,k)$ be a holomorphic extension of 
	$kw -\overline{kw}$ from $\partial\Omega$ to $\mbox{neigh}(\partial \Omega,\mathbb{C})$.
	$D_{\Omega}$ is continuous, $D_{\Omega}|_{\Omega}\in 
	C^{\infty}(\overline{\Omega})$, 
	$D_{\Omega}|_{\mathbb{C}\backslash \Omega}\in C^{\infty}(
	\mathbb{C}\backslash \Omega)$. Moreover
	\begin{equation}
		\begin{split}
		\overline{R}&=\frac{2}{\pi}\int_{\Omega}^{}e^{kz-\overline{kz}} L(dz)\\
	&+\frac{1}{4i\pi |k|^{2}}\left(-\int_{\tilde{\Gamma}}^{}
	D_{\Omega}(w)e^{iu(w,k)}dw+\overline{\int_{\Gamma}^{}
	D_{\Omega}(w)e^{-iu(w,k)}dw}\right)+\mathcal{O}(|k|^{-3}\ln |k|),			
		\end{split}	
		\label{eqmt}
	\end{equation}
	for $k\in \mathbb{C}$, $|k|\gg1$. 
Here $\Gamma$ is the contour from Fig.~\ref{contour}, and 
$\tilde{\Gamma}$ is defined as $\Gamma$ after replacing $k$ with 
$-k$. 	
\end{theo}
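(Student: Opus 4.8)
The starting point is the exact formula $\overline{R}=\tfrac{2}{\pi}\int_{\mathbb{C}}e^{kz-\overline{kz}}\overline{q}(z)\phi_{1}(z;k)\,L(dz)$ from \eqref{reflc} together with the decomposition $\phi_{1}=\phi_{1}^{0}+\phi_{1}^{1}=1+\phi_{1}^{1}$ and the formula $\phi_{1}^{1}=(1-AB)^{-1}AB(1)$ from \eqref{sy.41}. Since $q=1_{\Omega}$, the first term of $\overline{R}$ is exactly $\tfrac{2}{\pi}\int_{\Omega}e^{kz-\overline{kz}}L(dz)$, the leading term in \eqref{eqmt}. So the whole problem reduces to analysing the contribution of $\phi_{1}^{1}$, namely $\tfrac{2}{\pi}\int_{\Omega}e^{kz-\overline{kz}}\phi_{1}^{1}(z;k)\,L(dz)$, and showing it equals the $|k|^{-2}$ term plus $\mathcal{O}(|k|^{-3}\ln|k|)$. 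Here the improved Proposition~\ref{sy1new} ($s=2$) is what upgrades the remainder from the weaker bound one would get with $s=3/2$: with $s=2$ we have $\mathcal{K}^{2}=\mathcal{O}(h)$, so $(1-AB)^{-1}=1+AB+(AB)^{2}+\dots$ with $AB=\mathcal{O}(h)$ and $(AB)^{2}=\mathcal{O}(h^{2})$, meaning $\phi_{1}^{1}=AB(1)+\mathcal{O}(h^{2})$ in $\langle\cdot\rangle^{\epsilon}L^{2}$.

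The plan is then: (i) substitute $\phi_{1}^{1}=AB(1)+(AB)^{2}(1)+\mathcal{O}(h^{3})$ and check that the $(AB)^{2}(1)$ term contributes at order $h^{3}\ln h$ or smaller — this should follow from operator bounds plus the fact that $E$, $F$ each gain one power of $h$ while $\widehat{\tau}_{\pm\omega}$ are oscillatory; (ii) compute $AB(1)=E\widehat{\tau}_{-\omega}\tfrac{h}{2}1_{\Omega}\,F\widehat{\tau}_{\omega}\tfrac{h}{2}1_{\Omega}(1)$ explicitly. The inner piece $F\widehat{\tau}_{\omega}\tfrac{h}{2}(1)=(h\partial)^{-1}(\tfrac{h}{2}e^{kz-\overline{kz}}1_{\Omega})$ is, up to the phase, essentially $\tfrac12(\partial)^{-1}(e^{kz-\overline{kz}}1_{\Omega})$, i.e.\ a solid Cauchy-type transform of the oscillatory characteristic function, which by integration by parts (stationary/non-stationary phase, using strict convexity so the phase $\langle z,\omega\rangle$ has nondegenerate behaviour on $\partial\Omega$) localises to the boundary and produces, at leading order, a boundary integral. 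Then apply $E\widehat{\tau}_{-\omega}\tfrac{h}{2}1_{\Omega}$ to the result and finally pair against $\tfrac{2}{\pi}e^{kz-\overline{kz}}1_{\Omega}$; the repeated oscillatory integration by parts, together with the identification of $D_{\Omega}$ via \eqref{Do}–\eqref{Do2}, should assemble the two contour integrals $\int_{\tilde\Gamma}D_{\Omega}(w)e^{iu(w,k)}dw$ and $\overline{\int_{\Gamma}D_{\Omega}(w)e^{-iu(w,k)}dw}$, with the holomorphic extension $iu(w,k)$ of $kw-\overline{kw}$ off $\partial\Omega$ entering precisely because deforming the oscillatory boundary integral into the complex domain requires a holomorphic phase. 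The regularity statements ($D_{\Omega}$ continuous, smooth up to the boundary from each side) are classical properties of the Cauchy transform of a characteristic function of a smooth domain and can be cited or proved in a short lemma.

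I expect the main obstacle to be step (ii): carefully tracking the oscillatory integrals through two applications of $E\widehat{\tau}$, $F\widehat{\tau}$ and one final pairing, keeping uniform control in $k$ (including the direction $\omega=2i\bar k/|k|$), and identifying exactly which boundary terms survive at order $|k|^{-2}$ versus which are absorbed into $\mathcal{O}(|k|^{-3}\ln|k|)$. The $\ln|k|$ in the error is the tell-tale sign that one of the oscillatory integrals is only \emph{logarithmically} better than naive power counting — presumably from a region near $\partial\Omega$ where the Cauchy kernel singularity $1/(z-w)$ interacts with the stationary points of the phase on the convex boundary — so the delicate point is to extract the genuine $|k|^{-2}$ boundary contribution cleanly while showing the logarithmically-divergent piece is pushed to the next order. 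Strict convexity of $\partial\Omega$ is used exactly here, to guarantee that the phase $\langle z,\omega\rangle$ restricted to $\partial\Omega$ has only nondegenerate critical points (two, for each $\omega$), so stationary phase applies with the standard $|k|^{-1/2}$ gain per boundary integral and the holomorphic extension $u(w,k)$ exists in a fixed neighbourhood of $\partial\Omega$ independent of $k$. Once the leading boundary term is pinned down, matching it to the stated form with $\Gamma$, $\tilde\Gamma$ and $D_{\Omega}$ is bookkeeping, and the contributions of $\phi_{1}^{0}=1$ away from $\Omega$ vanish since $q$ is supported in $\Omega$.
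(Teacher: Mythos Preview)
Your overall outline has the right shape---decompose via the Neumann series, isolate the $\nu=0$ and $\nu=1$ terms, and bound $\nu\ge 2$---but there is a genuine gap in step (i) and a missing key ingredient in step (ii).

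For step (i): the operators $A=E\widehat{\tau}_{-\omega}\tfrac{hq}{2}$ and $B=\sigma F\widehat{\tau}_{\omega}\tfrac{h\overline{q}}{2}$ do \emph{not} each gain a power of $h$; after cancelling the $h$ in $E=(h\overline{\partial})^{-1}$ against the explicit $h$ in the amplitude, $A$ and $B$ are $\mathcal{O}(1)$, and it is only the composite $AB$ that is $\mathcal{O}(h)$ (this is exactly Proposition~\ref{sy1new}/Theorem~\ref{AB2}). Hence $(AB)^{2}(1)=\mathcal{O}(h^{2})$ in $L^{2}(\Omega)$, and pairing against $e^{kz-\overline{kz}}1_{\Omega}$ by Cauchy--Schwarz gives only $\mathcal{O}(h^{2})$---the \emph{same} order as the term you are trying to isolate. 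The paper closes this gap with a transpose identity: from $A_{k}^{\mathrm t}=-e^{-k\cdot+\overline{k\cdot}}A_{-k}e^{-k\cdot+\overline{k\cdot}}$ one rewrites $\int_{\Omega}e^{kz-\overline{kz}}(AB)^{\nu}(1)\,L(dz)=-\langle (BA)^{\nu-1}B(1_{\Omega})\,|\,e^{-k\cdot+\overline{k\cdot}}A_{-k}(1_{\Omega})\rangle$, and since $B(1_{\Omega})$ and $A_{-k}(1_{\Omega})$ are each $\mathcal{O}(|k|^{-1})$ in $L^{2}(\Omega)$ while $(BA)^{\nu-1}=\mathcal{O}(|k|^{1-\nu})$, the $\nu$th term is $\mathcal{O}(|k|^{-\nu-1})$, i.e.\ $\mathcal{O}(|k|^{-3})$ for $\nu\ge 2$. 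Without this transpose trick your remainder estimate does not close.

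For step (ii): the paper does not compute $AB(1)$ by iterated oscillatory integration by parts as you propose. Instead, the same transpose identity converts the $\nu=1$ term into the bilinear pairing $-\tfrac{2}{\pi}\langle B_{k}(1_{\Omega})\,|\,e^{-k\cdot+\overline{k\cdot}}A_{-k}(1_{\Omega})\rangle$, and then one invokes the \emph{explicit} structural formula for $A_{k}(1_{\Omega})$ established in \cite{KlSjSt20} (equations (5.31)--(5.32) there), namely $A_{k}(1_{\Omega})=\tfrac{1}{2\overline{k}}e^{-kz+\overline{kz}}1_{\Omega}+r(z,k)$ with $r=\tfrac{F_{\Gamma}}{4\pi i\overline{k}}-\tfrac{1}{2\overline{k}}e^{-iu}1_{\Omega_{+}}$ on $\Omega$. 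The contour integrals over $\Gamma$, $\widetilde{\Gamma}$ and the function $D_{\Omega}$ then emerge by applying Fubini to $\int_{\Omega}F(z,\pm k)\,L(dz)$, not by a further stationary-phase step. The $\ln|k|$ in the error is not a boundary-layer log as you guess; it comes from the cross term $\int_{\Omega}e^{-kz+\overline{kz}}\overline{r(z,k)}\,r(z,-k)\,L(dz)$, bounded via Cauchy--Schwarz by $\|r\|_{L^{2}(\Omega)}^{2}=\mathcal{O}(|k|^{-3}\ln|k|)$. Your direct route may be workable, but as written it lacks both the transpose mechanism and the structural input from \cite{KlSjSt20}, and without them the power counting falls one order short.
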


We parametrize the positively oriented boundary $\partial\Omega$ by $\gamma(t)$, $t\in [0,L]$ and put 
$\varphi(t)=-(k\gamma(t)-\overline{k\gamma(t)})/|k|$ as well as 
$a(t)=\dot{\gamma}(t)$.
% away from $\partial\Omega$ we work with $u$, not $kw - \overline{kw}$. 
We denote the critical points of $\varphi$ by $t_{\pm}$. Applying a stationary phase approximation to reflection coefficient 
(\ref{eqmt}), we obtain 
\begin{cor}
The leading order of the reflection coefficient for $|k|\gg1$ is given by	
	\begin{equation}
		\begin{split}
		\overline{R}&=\frac{i\sqrt{2\pi}}{\pi 
		\bar{k}}\sum_{t=t_{-},t_{+}}^{}e^{-|k|\varphi(t)}\left[
		|k|^{-\frac{1}{2}}a(t)\varphi''(t)^{-\frac{1}{2}}
		+|k|^{-\frac{3}{2}}\left( 
		\frac{a''(t)}{2}\varphi''(t)^{-\frac{3}{2}}\right.\right.\\
		&\left.\left.- 
 \left( a(t)\frac{\varphi^{(4)}(t)}{6} + 
a'(t)\frac{\varphi^{(3)}(t)}{2} \right)\varphi''(t)^{-\frac{5}{2}} + 
\frac{5}{24}a(t)\varphi^{(3)}(t)^2\varphi''(t)^{-\frac{7}{2}} 
\right)\right] \\
	&+\frac{\sqrt{2\pi}}{4\pi i
	|k|^{2}}\sum_{t=t_{-},t_{+}}^{}e^{-|k|\varphi(t)}|k|^{-\frac{1}{2}}\varphi''(t)^{-\frac{1}{2}}\left(-D_{\Omega}(\gamma(t))a(t)
	+\overline{D_{\Omega}(\gamma(t))\bar{a}(t)}\right) \\
	&+\mathcal{O}(|k|^{-3}\ln |k|)	.
		\end{split}	
		\label{eqmt2}
	\end{equation}
	The branches of the square roots are chosen as in Remark \ref{rem2}.
\end{cor}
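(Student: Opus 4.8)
The plan is to apply the classical method of stationary phase to each of the three boundary integrals that appear in Theorem~\ref{maintheorem} once they have been rewritten as one-dimensional oscillatory integrals over the arclength parameter $t\in[0,L]$. First I would treat the leading term $\frac{2}{\pi}\int_\Omega e^{kz-\overline{kz}}L(dz)$. By Stokes/Green's theorem (or, more precisely, by writing $1_\Omega=\partial_{\bar z}D_\Omega$ and integrating by parts, or by the explicit primitive $e^{kz-\overline{kz}}=\partial_z\bigl(\tfrac{1}{k}e^{kz-\overline{kz}}\bigr)$ up to an anti-holomorphic factor), this area integral reduces to a contour integral $\oint_{\partial\Omega}e^{kz-\overline{kz}}\,(\text{something})\,dz$; parametrizing by $\gamma(t)$ gives $\int_0^L e^{-|k|\varphi(t)}a(t)\,(\dots)\,dt$ with $a(t)=\dot\gamma(t)$. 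Since $\partial\Omega$ is strictly convex, $\varphi(t)=-(k\gamma(t)-\overline{k\gamma(t)})/|k|=\langle\gamma(t),\omega\rangle$-type linear functional restricted to the boundary has exactly two nondegenerate critical points $t_\pm$ (the two points where the tangent is parallel to the relevant direction), with $\varphi''(t_\pm)\neq0$; this is where strict convexity is essential and is the structural input one must invoke.

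Next I would expand each integral to the required order. The standard stationary phase expansion states that for $I(\lambda)=\int g(t)e^{-\lambda\varphi(t)}dt$ with a single nondegenerate critical point $t_0$,
\begin{equation*}
I(\lambda)=\sqrt{\frac{2\pi}{\lambda\varphi''(t_0)}}\,e^{-\lambda\varphi(t_0)}\left(g(t_0)+\frac{1}{\lambda}\,c_1(t_0)+\mathcal{O}(\lambda^{-2})\right),
\end{equation*}
where $c_1$ is the familiar second-order coefficient involving $g''$, $g'\varphi^{(3)}$, $g\varphi^{(4)}$ and $g(\varphi^{(3)})^2$ against appropriate powers of $\varphi''$. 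For the leading area term we need $g(t)=a(t)$ and the full two-term expansion (orders $|k|^{-1/2}$ and $|k|^{-3/2}$ relative to the prefactor $1/\bar k$), which produces exactly the bracketed expression in the first two lines of \eqref{eqmt2} after collecting the prefactor $\frac{i\sqrt{2\pi}}{\pi\bar k}$; the coefficient $i/\bar k$ comes from the $1/k$ generated by the primitive together with the relation between $\omega$, $k$ and $\bar k$. For the two correction integrals $\int_{\tilde\Gamma}D_\Omega e^{iu}dw$ and $\overline{\int_\Gamma D_\Omega e^{-iu}dw}$, since they already carry the prefactor $\frac{1}{4i\pi|k|^2}$, only the leading stationary phase term (order $|k|^{-1/2}$) is needed to stay within the stated $\mathcal{O}(|k|^{-3}\ln|k|)$ error; here $g(t)=D_\Omega(\gamma(t))a(t)$ (resp. its conjugate), and $iu(w,k)$ restricted to $\partial\Omega$ agrees with $kw-\overline{kw}$, hence has phase $-|k|\varphi(t)$ with the same critical points $t_\pm$. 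Combining gives the third line of \eqref{eqmt2}.

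The main obstacle — and the point deserving the most care — is bookkeeping of branches and signs: one must fix consistent square-root branches for $\varphi''(t_\pm)^{-1/2}$ (and the higher half-integer powers $\varphi''^{-3/2}$, $\varphi''^{-5/2}$, $\varphi''^{-7/2}$) at each of the two critical points, track the overall phase factors $e^{-|k|\varphi(t_\pm)}$ (which are genuine oscillatory phases since $\varphi$ is real-valued, not decaying exponentials), and correctly conjugate the second correction integral, so that the $\overline{D_\Omega(\gamma(t))\bar a(t)}$ term comes out with the right sign relative to $-D_\Omega(\gamma(t))a(t)$. This is precisely what the pointer to Remark~\ref{rem2} handles. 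A secondary technical point is justifying that the $\mathcal{O}(|k|^{-3}\ln|k|)$ remainder of Theorem~\ref{maintheorem} survives the stationary phase procedure unchanged: since stationary phase on a smooth contour integral of a fixed smooth density only contributes errors of size $\mathcal{O}(|k|^{-1/2}\cdot|k|^{-N})$ for the truncated part, the logarithmic term from \eqref{eqmt} remains dominant, and one simply needs that the implied constants are uniform for $k$ in the relevant sector, which follows from smoothness and strict convexity together with Proposition~\ref{sy1new}. I would not grind through the explicit $c_1$ coefficient computation since it is the textbook stationary-phase formula; the novelty is entirely in identifying the densities $a(t)$ and $D_\Omega(\gamma(t))a(t)$ and in the branch conventions.
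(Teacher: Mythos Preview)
Your proposal is correct and follows essentially the same route as the paper: convert the area integral to $\frac{i}{2\bar k}\int_{\partial\Omega}e^{kz-\overline{kz}}dz$ via Stokes, apply the two-term stationary phase expansion there, apply only the leading-order stationary phase to the two $D_\Omega$ integrals (which already carry the $|k|^{-2}$ prefactor), and combine while invoking Remark~\ref{rem2} for the branch choices. The paper does spell out the explicit second-order stationary-phase coefficient $c_1$ rather than citing it, but as you note this is standard; your identification of the amplitudes $a(t)=\dot\gamma(t)$ and $D_\Omega(\gamma(t))a(t)$ and of the error bookkeeping matches the paper's argument.
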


The paper is organized as follows: in section 2, we give estimates 
for the operator $AB$. In section 3 these estimates are applied to 
the reflection coefficient. In section 4 we provide explicit formulae 
via a stationary phase approximation. We consider the example of 
the characteristic function of the unit disk and give a partial proof of a 
conjecture in \cite{KlSjSt20} for the reflection coefficient in this 
case. We add some concluding remarks in section 5. 

\section{Estimates for the operator $AB$}\label{AB}
\setcounter{equation}{0}
Let $\Omega\Subset \mathbb{C}$ be strictly convex with smooth 
boundary. 
The central problem is to study $AB$ where  $A$, $B$ are
  given in (\ref{sy.16}),
\begin{equation*}
A = E \widehat{\tau}_{-\omega}\frac{hq}{2}, \quad B = \sigma F 
\widehat{\tau}_\omega\frac{h\overline{q}}{2} ( = \sigma 
\overline{A}), \quad q = 1_{\Omega},
\end{equation*}
\begin{equation*}
Au(z) = \frac{1}{2\pi}\int_{\Omega}\frac{1}{z-w}e^{-kw +\overline{kw}}u(w)L(\mathrm{d}w),
\end{equation*}
\begin{equation*}
Bu(z) = \frac{\sigma}{2\pi}\int_{\Omega}\frac{1}{\overline{z}-\overline{w}}e^{kw -\overline{kw}}u(w)L(\mathrm{d}w),
\end{equation*}
\begin{equation}
\begin{split}
ABu(z)  &= \frac{\sigma}{4\pi^2}\iint_{\Omega\times\Omega}\frac{1}{z-\zeta}e^{-k\zeta +\overline{k\zeta}}\frac{1}{\overline{\zeta} - \overline{w}}e^{kw -\overline{kw}}u(w)L(\mathrm{d}\zeta)L(\mathrm{d}w)\\
&= \int_{\Omega}K(z,w)u(w)L(\mathrm{d}w),
\end{split}
\label{0p3}
\end{equation}
\begin{equation}
  K(z,w) = \frac{\sigma}{4\pi^2}\iint_{\Omega}\frac{e^{\overline{k\zeta}}}{(z-\zeta)}\frac{e^{-k\zeta}}{(\overline{\zeta} - \overline{w})}\frac{\mathrm{d}\overline{\zeta}\wedge\mathrm{d}\zeta}{2\mathrm{i}}e^{kw -\overline{kw}},
\label{1p3}
\end{equation}
for the case of the characteristic function of $\Omega$.

We  look for functions $\widetilde{f}$, $\widetilde{g}$
  such that with $d$ denoting exterior differentiation with respect 
  to $\zeta$,
\begin{equation*}
  \frac{e^{\overline{k\zeta}}}{(z-\zeta)}\frac{e^{-k\zeta}}{(\overline{\zeta}
    - \overline{w})}\mathrm{d}\overline{\zeta}\wedge\mathrm{d}\zeta =
  \mathrm{d}\left(e^{\overline{k\zeta}-k\zeta
    }(\widetilde{f}\mathrm{d}\zeta +
    \widetilde{g}\overline{\mathrm{d}\zeta})\right)+ \dots ,
\end{equation*}
i.e.\ 
\begin{equation*}
  \frac{e^{\overline{k\zeta}}}{(z-\zeta)}\frac{e^{-k\zeta}}{(\overline{\zeta}
    - \overline{w})} =
  \partial_{\overline{\zeta}}(e^{\overline{k\zeta}-k\zeta
  }\widetilde{f})  - \partial_{{\zeta}}(e^{\overline{k\zeta}-k\zeta
  }\widetilde{g})+ \dots \ .
\end{equation*}
With
\begin{equation*}
\widetilde{f} = \frac{f}{\overline{k}(z-\zeta)(\overline{\zeta} - \overline{w})}, \quad \widetilde{g} = \frac{g}{{k}(z-\zeta)(\overline{\zeta} - \overline{w})},
\end{equation*}
we get
%\begin{equation*}
%\partial_{\overline{\zeta}}(e^{\overline{k\zeta}-k\zeta }\widetilde{f})  - \partial_{{\zeta}}%(e^{\overline{k\zeta}-k\zeta }\widetilde{g})  = 
%\frac{1}{\overline{k}}\partial_{\overline{\zeta}}\left(\frac{e^{\overline{k\zeta}-k\zeta} f}{%(z-\zeta)(\overline{\zeta} - \overline{w})}   \right) - 
%\frac{1}{k}\partial_{\zeta}\left(\frac{e^{\overline{k\zeta}-k\zeta} g}{(z-\zeta)(\overline{\z%eta} - \overline{w})}   \right)
%\end{equation*}
\begin{multline*}\partial_{\overline{\zeta}}(e^{\overline{k\zeta}-k\zeta }\widetilde{f})=
\frac{1}{\overline{k}}\partial_{\overline{\zeta}}\left(\frac{e^{\overline{k\zeta}-k\zeta} f}{(z-\zeta)(\overline{\zeta} - \overline{w})}   \right) =\\ 
\frac{e^{\overline{k\zeta}-k\zeta}}{(z-\zeta)(\overline{\zeta} - \overline{w})}
\left(1 - \frac{1}{\overline{k}(\overline{\zeta} - \overline{w})} +
\frac{1}{\overline{k}}\partial_{\overline{\zeta}}\right)f -
\frac{1}{\overline{k}}\frac{e^{\overline{k\zeta}-k\zeta}}{(\overline{\zeta}-\overline{w})}\pi
f(z)\delta_z(\zeta),
\end{multline*}
\begin{multline*} - \partial_{{\zeta}}(e^{\overline{k\zeta}-k\zeta }\widetilde{g})=
-\frac{1}{k}\partial_{\zeta}\left(\frac{e^{\overline{k\zeta}-k\zeta} g}{(z-\zeta)(\overline{\zeta} - \overline{w})}   \right) =\\ 
\frac{e^{\overline{k\zeta}-k\zeta}}{(z-\zeta)(\overline{\zeta} - \overline{w})}
\left(1 - \frac{1}{k(z - \zeta )} - \frac{1}{k}\partial_{\zeta}\right) g - 
\frac{1}{k}\frac{e^{\overline{k\zeta}-k\zeta}}{(z-\zeta)}\pi g(w)\delta_w(\zeta).
\end{multline*}
Hence,
\begin{multline}
\partial_{\overline{\zeta}}(e^{\overline{k\zeta}-k\zeta}\widetilde{f}) - \partial_{\zeta}(e^{\overline{k\zeta}-k\zeta}\widetilde{g}) =\\ 
\frac{e^{\overline{k\zeta}-k\zeta}}{(z-\zeta)(\overline{\zeta} - \overline{w})}\left[ \left(1 - \frac{1}{\overline{k}(\overline{\zeta} -\overline{w})} + \frac{1}{\overline{k}}\partial_{\overline{\zeta}}\right)f  + \left(1 - \frac{1}{k(z-\zeta)} - \frac{1}{k}\partial_{\zeta}\right)g \right] \\
- \frac{\pi e^{\overline{k\zeta}-k\zeta}}{\overline{k}(\overline{\zeta} -\overline{w})}f\delta_z(\zeta) - \frac{\pi e^{\overline{k\zeta}-k\zeta}}{k(z-\zeta)}g\delta_{w}(\zeta)
\label{0p4} 
\end{multline}

We would like to have 
\begin{equation*}
\left(1 - \frac{1}{\overline{k}(\overline{\zeta} -\overline{w})} + 
\frac{1}{\overline{k}}\partial_{\overline{\zeta}}\right)f(\zeta)  + \left(1 - \frac{1}{k(z-\zeta )} - \frac{1}{k}\partial_{\zeta}\right)g(\zeta) = 1.
\end{equation*}
We start by constructing a partition 
$ 1 =\chi_{w} + \chi_{z} + r_{w,z}$, where $r_{w,z}$ 
is supported in a region $|\zeta - z|$, $|\zeta - 
w|<\mathcal{O}(\frac{1}{|k|})$ and then solve, up to asymptotic errors,
\begin{equation}
\left(1 - \frac{1}{\overline{k}(\overline{\zeta} -\overline{w})} + \frac{1}{\overline{k}}\partial_{\overline{\zeta}}\right)f = \chi_w,
\label{1p4}
\end{equation}
\begin{equation}
\left(1 - \frac{1}{k(z-\zeta)} - 
\frac{1}{k}\partial_{\zeta}\right)g = \chi_z.
\label{2p4}
\end{equation}
Put
\begin{equation}\label{AB.1}
  d(z,w, k) = |z-w| + \frac{1}{|k|}, \end{equation}
\begin{prop}\label{AB1}
  Let
  \begin{equation}\label{AB.2}
\widehat{d}(z,w,\zeta ,k)=d(z,w,k)+\left| \zeta -\frac{z+w}{2} \right|
\end{equation}
and notice that $\widehat{d}(z,w,\zeta ,k)$ is uniformly of the same
order of magnitude as $d(z,w,k)+|\zeta -z|$ and $d(z,w,k)+|\zeta -w|$,
since $|z-w|\le d(z,w,k)$.

\par For all $(w,z,k)\in \mathbb{C}^3$ with $|k|\ge 1$,
there exist $\chi _w, \chi _z\in C^\infty (\mathbb{C})$, $r_{w,z}\in
C_0^\infty (\mathbb{C})$ such that
\begin{equation}\label{AB.3}\chi _w,\chi _z,r_{w,z}\ge 0,\end{equation}
\begin{equation}\label{AB.4}\chi _w+\chi _z+r_{w,z}=1,\end{equation}
\begin{equation}
  \label{AB.5}
  \begin{split}
|\zeta - z|&\geq\frac{1}{\mathcal{O}(1)}\widehat{d}(z,w,\zeta, k) \text{ on } \mathrm{supp\,}\chi_z,\\
|\zeta - w|&\geq\frac{1}{\mathcal{O}(1)}\widehat{d}(z,w, \zeta, k)\text{ on }\mathrm{supp\,}\chi_w,
\end{split}
\end{equation}
\begin{equation}\label{AB.6}r_{w,z}\hbox{ has its support  in }\left\{\zeta
  \in \mathbb{C};\, |\zeta -z|,\, |\zeta -w|\le \frac{3}{2|k|}\right\}
  ,\end{equation}
\begin{equation}\label{AB.7}
\nabla^\alpha_\zeta\chi_z =
\mathcal{O}\left(\widehat{d}^{-|\alpha |}\right), \quad
\nabla^\alpha_\zeta\chi_{w} = \mathcal{O}\left(
  \widehat{d}^{-|\alpha |}\right),\quad \forall \alpha\in 
  \mathbb{N}^{2}, \quad 
  |\alpha|=|\alpha|_{l^{1}}=\alpha_{1}+\alpha_{2},
\end{equation}
\begin{equation}\label{AB.8}
\nabla^\alpha_\zeta r_{z,w} = 
\mathcal{O}_{N,\alpha}(1)\widehat{d}^{-|\alpha|}(|k|\widehat{d})^{-N}, \forall \alpha\in\mathbb{N}^{2}, \forall N\in \mathbb{N}.
\end{equation}
These estimates are uniform with respect to $w,z,k$.
\end{prop}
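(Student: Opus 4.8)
The plan is to write $\chi_w,\chi_z,r_{w,z}$ down \emph{explicitly} as rescaled and normalised cut‑offs adapted to the single length scale $d=d(z,w,k)=|z-w|+\tfrac{1}{|k|}$ of (\ref{AB.1}); collapsing the two competing scales $|z-w|$ and $\tfrac{1}{|k|}$ into $d$ is exactly what makes the two regimes $|z-w|\gtrsim\tfrac{1}{|k|}$ and $|z-w|\lesssim\tfrac{1}{|k|}$ uniform in one formula. The only preliminary is the comparison already asserted in the statement: from $|z-w|\le d$ one has $|\zeta-\tfrac{z+w}{2}|\le|\zeta-z|+\tfrac{d}{2}$ and $|\zeta-z|\le|\zeta-\tfrac{z+w}{2}|+\tfrac{d}{2}$, hence $\tfrac{2}{3}(d+|\zeta-z|)\le\widehat d\le\tfrac{3}{2}(d+|\zeta-z|)$ and symmetrically with $w$, so that $\widehat d$, $d+|\zeta-z|$ and $d+|\zeta-w|$ are pairwise comparable with absolute constants; every $\widehat d$‑estimate below will be checked in the equivalent form with $d+|\zeta-z|$ or $d+|\zeta-w|$.

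Fix, once and for all, $\theta\in C^\infty(\mathbb R;[0,1])$ with $\theta\equiv1$ on $[0,1]$ and $\mathrm{supp\,}\theta\subset(-\infty,\tfrac{3}{2}]$, and a small absolute constant $c_0\in(0,\tfrac{1}{4})$ (say $c_0=\tfrac{1}{5}$). I would set
\[
\psi_z(\zeta)=\theta\!\Big(\tfrac{|\zeta-z|}{c_0d}\Big),\qquad \psi_w(\zeta)=\theta\!\Big(\tfrac{|\zeta-w|}{c_0d}\Big),\qquad S=1+(1-\psi_z)(1-\psi_w),
\]
and then $\chi_z=(1-\psi_z)/S$, $\chi_w=(1-\psi_w)/S$, $r_{w,z}=\psi_z\psi_w/S$. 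Because $\theta$ is constant near $0$, each $\psi_\bullet$ equals $1$ near the corresponding point and hence lies in $C^\infty(\mathbb C)$; moreover $0\le\psi_\bullet\le1$, $\psi_z\equiv1$ on $B(z,c_0d)$, and $\mathrm{supp\,}\psi_z\subset\overline{B(z,\tfrac{3}{2}c_0d)}$ (likewise for $w$). Since $1\le S\le2$, the functions $\chi_z,\chi_w,r_{w,z}$ are smooth and nonnegative, $r_{w,z}$ is compactly supported, and the identity $(1-\psi_z)+(1-\psi_w)+\psi_z\psi_w=S$ gives $\chi_z+\chi_w+r_{w,z}=1$; this is (\ref{AB.3})--(\ref{AB.4}).

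The support properties come next. On $\mathrm{supp\,}\chi_z$ one has $\psi_z\ne1$, hence $|\zeta-z|\ge c_0d$, so $\widehat d\le\tfrac{3}{2}(d+|\zeta-z|)\le\tfrac{3}{2}(1+\tfrac{1}{c_0})|\zeta-z|$, i.e.\ $|\zeta-z|\ge\widehat d/\mathcal O(1)$ as in (\ref{AB.5}) (symmetrically for $\chi_w$). For (\ref{AB.6}): $\mathrm{supp\,}r_{w,z}\subset\overline{B(z,\tfrac{3}{2}c_0d)}\cap\overline{B(w,\tfrac{3}{2}c_0d)}$ is empty unless $|z-w|\le3c_0d=3c_0(|z-w|+\tfrac{1}{|k|})$, that is unless $|z-w|\le\tfrac{3c_0}{1-3c_0}\tfrac{1}{|k|}$, in which case $d\le\tfrac{1}{(1-3c_0)|k|}$ and therefore $|\zeta-z|,|\zeta-w|\le\tfrac{3}{2}c_0d\le\tfrac{3c_0}{2(1-3c_0)}\tfrac{1}{|k|}\le\tfrac{3}{2|k|}$ on $\mathrm{supp\,}r_{w,z}$, the last step because $c_0\le\tfrac14$. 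In particular $\mathrm{supp\,}r_{w,z}$ sits at scale $1/|k|$: there $\tfrac{1}{|k|}\le d\le\widehat d\lesssim\tfrac{1}{|k|}$, so $|k|\widehat d\sim1$.

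It remains to establish the symbol estimates. Writing $\psi_z(\zeta)=\Theta\big(\tfrac{\zeta-z}{c_0d}\big)$ with $\Theta(\xi)=\theta(|\xi|)\in C^\infty(\mathbb R^2)$, whose derivatives $\nabla^\alpha\Theta$ for $|\alpha|\ge1$ are bounded and supported in $\{1\le|\xi|\le\tfrac{3}{2}\}$, one gets $\nabla^\alpha_\zeta\psi_z=(c_0d)^{-|\alpha|}(\nabla^\alpha\Theta)\big(\tfrac{\zeta-z}{c_0d}\big)=\mathcal O\big((c_0d)^{-|\alpha|}\big)$, supported (for $|\alpha|\ge1$) where $|\zeta-z|\sim d$, hence where $\widehat d\sim d$; thus $\nabla^\alpha_\zeta\psi_z=\mathcal O(\widehat d^{-|\alpha|})$ uniformly, and likewise for $\psi_w$. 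Since $S\ge1$, the Fa\`a di Bruno formula gives $\nabla^\alpha(1/S)=\mathcal O(\widehat d^{-|\alpha|})$, and the Leibniz rule then yields (\ref{AB.7}) for $\chi_z$ and $\chi_w$ as well as $\nabla^\alpha r_{w,z}=\mathcal O(\widehat d^{-|\alpha|})$; combined with the fact just noted that $|k|\widehat d\sim1$ on $\mathrm{supp\,}r_{w,z}$, so that $(|k|\widehat d)^{-N}=\mathcal O_N(1)$ there, this gives (\ref{AB.8}). Uniformity in $(w,z,k)$ is automatic, since every constant introduced is absolute and the construction is covariant under translation. The one point deserving care is precisely the constant‑chasing in (\ref{AB.6})---one must choose $c_0$ small enough that the overlap $\overline{B(z,\tfrac{3}{2}c_0d)}\cap\overline{B(w,\tfrac{3}{2}c_0d)}$ is not merely forced to be at scale $1/|k|$ but genuinely lands inside the prescribed balls of radius $3/(2|k|)$; everything else is routine once one observes that all derivatives of the rescaled cut‑offs are localised to the annulus $|\zeta-z|\sim d$ (resp.\ $|\zeta-w|\sim d$), where $\widehat d$ and $d$ agree up to constants.
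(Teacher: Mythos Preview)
Your proof is correct and follows the same underlying strategy as the paper: build rescaled cut-offs at the single scale $d=|z-w|+1/|k|$ and normalise to obtain a partition of unity with the required symbol bounds. The paper's execution differs in one detail: it introduces a \emph{separate} $1/|k|$-scale cut-off $\widetilde r_{z,w}=\Psi^1(|k|(\zeta-z))\Psi^1(|k|(\zeta-w))$ and then divides each of the three raw pieces $\widetilde\chi_z,\widetilde\chi_w,\widetilde r_{z,w}$ by their sum. Your construction is tidier: you work only at scale $d$ and exploit the algebraic identity $(1-\psi_z)+(1-\psi_w)+\psi_z\psi_w=1+(1-\psi_z)(1-\psi_w)=S$, letting the overlap condition force $\mathrm{supp}(\psi_z\psi_w)$ down to scale $1/|k|$ automatically. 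This buys a slightly shorter argument (one cut-off profile instead of two, and no separate check that the sum is bounded below), while the paper's version makes the $1/|k|$ localisation of $r_{w,z}$ more visibly built in. Both routes yield the same estimates with the same ease.
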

\begin{proof} Put
$$\widetilde{\chi}_z(\zeta) = (1-\psi^0)\left(\frac{\zeta - z}{d}\right),\ \
\widetilde{\chi}_w(\zeta ) = (1-\psi^0)\left(\frac{\zeta - w}{d}\right),
$$
where $\psi^0\in C^\infty_0(D(0,1/3))$ is real valued with $1_{D(0,1/4)}\leq\psi^0\leq 1_{D(0,1/3)}$. Clearly $\widetilde{\chi}_z(\zeta) + \widetilde{\chi}_z(\zeta)\geq 0$ and
\begin{equation}\label{AB.9}
\widetilde{\chi}_z(\zeta) + \widetilde{\chi}_z(\zeta) = 0 \Rightarrow 
\begin{cases}
|\frac{\zeta-z}{d}|\leq\frac{1}{3}\\
|\frac{\zeta-w}{d}|\leq\frac{1}{3}
\end{cases}
\Leftrightarrow
\begin{cases}
|\zeta-z|\leq\frac{1}{3}|z-w| +\frac{1}{3}\frac{1}{|k|},\\
|\zeta-w|\leq\frac{1}{3}|z-w| +\frac{1}{3}\frac{1}{|k|}.
\end{cases}
\end{equation}
The last inequalities imply that
$|z-w|\leq\frac{2}{3}|z-w|+\frac{2}{3|k|}$, so
$$|z-w|\leq \frac{2}{|k|},\ \ |\zeta -z|,\, |\zeta -w|\le \frac{1}{|k|}.$$ 

We have $\nabla^{\alpha}_\zeta\widetilde{\chi}_z$, $\nabla^{\alpha}_\zeta\widetilde{\chi}_w = \mathcal{O}(d^{-|\alpha|})$ and 
\begin{equation}
  \label{0p5}
  \begin{split}
|\zeta - z|&\geq\frac{1}{\mathcal{O}(1)}d(z,w,k) \text{ on } \mathrm{supp\,}\widetilde{\chi}_z,\\
|\zeta - w|&\geq\frac{1}{\mathcal{O}(1)}d(z,w,k)\text{ on }\mathrm{supp\,}\widetilde{\chi}_w,
\end{split}
\end{equation}
where we can replace $d(z,w,k)$ with $\widehat{d } (z,w,\zeta
  ,k)$.
Moreover, since $\widetilde{\chi}_\cdot(\zeta) =1$ when $|\cdot -\zeta|\geq 
\frac{1}{3}d$, we have
$$
\nabla^\alpha_\zeta\widetilde{\chi}_z =
\mathcal{O}\left((d+|z-\zeta|)^{-|\alpha |}\right), \quad
\nabla^\alpha_\zeta\widetilde{\chi}_{w} = \mathcal{O}\left(
  (d+|w-\zeta|)^{-|\alpha |}\right).
$$
Hence as noted in the statement of the proposition,
\begin{equation}\label{1p5}\nabla _{\zeta }^\alpha \widetilde{\chi
  }_z,\ \nabla _{\zeta }^\alpha \widetilde{\chi
  }_w =\mathcal{ O}(\widehat{d}^{-|\alpha |}). \end{equation}

Let $\Psi^1\in C^\infty_0(D(0, \frac{3}{2}))$ satisfy
$1_{D(0,1)}\leq\Psi^1\leq 1$ and put $\widetilde{r}_{z,w} =
\Psi^1(|k|(\zeta - z))\Psi^1(|k|(\zeta - w))$. Then, by (\ref{AB.9}) and
the subsequent observation,
\begin{equation}
f:=\widetilde{\chi}_z(\zeta)+\widetilde{\chi}_w(\zeta)+\widetilde{r}_{z,w}\asymp 1,
\label{2p5}
\end{equation}
and by construction the last term has its support in $$\left\{ 
\zeta\in\mathbb{C}; |\zeta-z|,~ |\zeta
-w|\leq\frac{3}{2|k|}\right\},$$ so that $|z-w|\leq
\frac{\mathcal{O}(1)}{|k|}$, $\widehat{d}(z,w,\zeta ,k) \asymp
\frac{1}{|k|}$ on $\mathrm{supp}~\widetilde{r}_{z,w} $. It follows
that $\widetilde{r}_{z,w} = \mathcal{O}_{N}(1)(|k|\widehat{d})^{-N},~
\forall N\ge 0$ and more generally
\begin{equation}
\nabla^\alpha_\zeta\widetilde{r}_{z,w} = \mathcal{O}_{N,\alpha}(1)\widehat{d}^{-|\alpha|}(|k|\widehat{d})^{-N}
\label{3p5}.
\end{equation}
From   (\ref{1p5}) we get
\begin{equation}
\nabla^\alpha_\zeta f,~\nabla^\alpha_\zeta\frac{1}{f} = \mathcal{O}(1)\widehat{d}^{-|\alpha|}.
\label{5p5}
\end{equation}
Put 
\begin{equation}
\chi_z = \widetilde{\chi}_z/f,\quad  \chi_w = \widetilde{\chi}_w/f,\quad 
r_{z,w} =\widetilde{r}_{z,w}/f .
\label{6p5}
\end{equation}
Then we have
\begin{equation}
\chi_z + \chi_w + r_{z,w} = 1,
\label{7p5}
\end{equation}
 and (\ref{AB.3})--(\ref{AB.8}) follow. \end{proof}

We now return to the problem (\ref{1p4}) -- (\ref{2p4}). As a first 
approximate solution, we take $f^0 = \chi_w$, $g^0 = \chi_z$ and 
treat the other terms in the LHSs as perturbations. We then get $f,g$ 
as formal Neumann series sums
\begin{equation}
f = \sum^\infty_0\left(\frac{1}{\overline{k}(\overline{\zeta}
    -\overline{w})}  -
  \frac{1}{\overline{k}}\partial_{\overline{\zeta}}\right)^\nu \chi_w, \quad
g = \sum^\infty_0\left(\frac{1}{k(z -\zeta)}  +
  \frac{1}{k}\partial_{\zeta}\right)^\nu \chi_z .
\label{23p6}
\end{equation}        
Recall (\ref{AB.5}) and (\ref{AB.6}).
Then 
\begin{equation*}
\begin{split}
\nabla^{\nu _0}_\zeta\frac{1}{(\overline{\zeta} - \overline{w})^{\nu _1}}\partial_{\overline{\zeta}}^{\nu _2}\chi_w &= \mathcal{O}(\widehat{d}^{-(\nu _0+\nu _1+\nu _2)}),\\ 
~\\
\nabla^{\nu _0}_\zeta\frac{1}{(z-\zeta )^{\nu _1}}\partial_{{\zeta}}^{\nu _2}\chi_z &= \mathcal{O}(\widehat{d}^{-(\nu _0+\nu _1+\nu _2)}). 
\end{split}
\end{equation*}
Thus
\begin{equation}
\left.
\begin{array}{ll}
  &\displaystyle\nabla_{\zeta}^{\nu _0}\left(
    \frac{1}{\overline{k}(\overline{\zeta} -
    \overline{w})} - \frac{1}{\overline{k}}\partial_{\overline{\zeta}}\right)^\nu \chi_w\\
  &\displaystyle\nabla_{\zeta}^{\nu _0}\left( \frac{1}{k(z-\zeta )} + \frac{1}{k}\partial_{\zeta}\right)^\nu \chi_z
\end{array}\right \} = \mathcal{O}(\widehat{d}^{-\nu _0}(k\widehat{d})^{-\nu }).
\label{5p6}
\end{equation}
For $N\in \mathbb{N}$ define $f_N,~g_N$ as in (\ref{23p6}) but with finite sums
\begin{equation}
f_N = \sum^N_0\left(\frac{1}{\overline{k}(\overline{\zeta}
    -\overline{w})}  -
  \frac{1}{\overline{k}}\partial_{\overline{\zeta}}\right)^\nu \chi_w, \quad
g_N = \sum^N_0\left(\frac{1}{k(z -\zeta)}  +
  \frac{1}{k}\partial_{\zeta}\right)^\nu \chi_z .
\label{23p7}
\end{equation}
Notice that by (\ref{5p6}) \begin{equation}\label{3.5p7}f_N,\, g_N = \mathcal{O}(1) .\end{equation} Then c.f. (\ref{1p4}), (\ref{2p4})
\begin{equation}
\left(1 - \frac{1}{\overline{k}(\overline{\zeta} - \overline{w})} + \frac{1}{\overline{k}}\partial_{\overline{\zeta}}\right)f_N = \chi_w - \left(  \frac{1}{\overline{k}(\overline{\zeta} - \overline{w})} - \frac{1}{\overline{k}}\partial_{\overline{\zeta}}\right)^{N+1} \chi_w =:\chi_w - S^{N+1}_w
\label{4p7}
\end{equation}
\begin{equation}
\left(1 - \frac{1}{k(z-\zeta )} -
  \frac{1}{k}\partial_{\zeta}\right)g_N = \chi_z - \left(
  \frac{1}{k(z-\zeta ) } + \frac{1}{k}\partial_{\zeta}\right)^{N+1} \chi_z =:\chi_z - T^{N+1}_z
\label{5p7}
\end{equation}
where
\begin{equation}
\nabla^{\nu _0}_\zeta S^{N+1}_w,~\nabla^{\nu _0}_\zeta T^{N+1}_z = \mathcal{O}(\widehat{d}^{-\nu _0}(k\widehat{d})^{-N-1}).
\label{6p7}
\end{equation}
Now, combine (\ref{4p7}) - (\ref{6p7}) with  (\ref{7p5}) and (\ref{3p5}) (valid also for $r_{z,w}$) to get 
\begin{multline}
\left(1 - \frac{1}{\overline{k}(\overline{\zeta } - \overline{w})} + \frac{1}{\overline{k}}\partial_{\overline{\zeta}}\right)f_N + \left(1 - \frac{1}{k(z - \zeta)} - \frac{1}{k}\partial_{\zeta}\right)g_N =\\ 
 1-S_w^{N+1} - T_z^{N+1} - r_{z,w} =: 1- r^{N+1}
\label{7p7}
\end{multline}
where
\begin{equation}
\nabla^{\nu _0}_\zeta r^{N+1} = \mathcal{O}(\widehat{d}^{-\nu _0}(k\widehat{d})^{-N-1}), \quad \nu _0\in \mathbb{N}
\label{8p7}.
\end{equation}
We use this in the discussion after (\ref{1p3}). With 
\begin{equation*}
\widetilde{f}_N  = \frac{f_N}{\overline{k}(z-\zeta)(\overline{\zeta} - \overline{w})}, \quad 
\widetilde{g}_N  = \frac{g_N}{k(z-\zeta)(\overline{\zeta} - \overline{w})} 
\end{equation*}
we get (cf. (\ref{0p4})) 
\begin{multline*}
d\left(e^{\overline{k\zeta} -k\zeta}(\widetilde{f}_N d\zeta + \widetilde{g}_N \overline{d\zeta}  )\right) =\\
\left[\frac{e^{\overline{k\zeta} -k\zeta}}{(z-\zeta)(\overline{\zeta} - \overline{w})} - \frac{e^{\overline{k\zeta} -k\zeta}r^{N+1}}{(z-\zeta)(\overline{\zeta} - \overline{w})}  -
\frac{\pi e^{\overline{k\zeta} -k\zeta}f_N\delta_z(\zeta)}{\overline{k}(\overline{\zeta} - \overline{w})} -
\frac{\pi e^{\overline{k\zeta} -k\zeta}g_N\delta_w(\zeta)}{k(z- \zeta)}  
 \right]\overline{d\zeta}\wedge d\zeta.
\end{multline*}
We use this in (\ref{1p3}) and apply Stokes' formula:
\begin{equation}\label{1p8}\begin{split}
    K(z,w) =& \frac{\sigma
      e^{kw - \overline{kw}}}{(2\pi)^22\mathrm{i}}\left[\underbrace{\iint_\Omega
      \frac{e^{\overline{k\zeta} -k\zeta}}{(z-\zeta)(\overline{\zeta}
        - \overline{w})}r^{N+1}\overline{d\zeta}\wedge
      d\zeta}_{=:K_1(z,w)}\right. \\
    &+\underbrace{\frac{2i\pi e^{\overline{kz} -kz}}{\overline{k}(\overline{z} - \overline{w})}f_N(z,w,z)1_{\Omega}(z)}_{=:K_2(z,w)}+
    \underbrace{\frac{2i\pi e^{\overline{kw} -kw}}{k(z -
        w)}g_N(z,w,w) 1_{\Omega}(w)}_{=:K_3(z,w)} \\
    & +\left.\underbrace{\int_{\partial\Omega}\frac{ e^{\overline{k\zeta} -k\zeta}f_N}{\overline{k}(\overline{\zeta} - \overline{w})(z - \zeta)}d\zeta}_{=:K_4(z,w)} +
     \underbrace{\int_{\partial\Omega}\frac{
        e^{\overline{k\zeta} -k\zeta}g_N}{k(\overline{\zeta} -
        \overline{w})(z - \zeta
        )}d\overline{\zeta}}_{=:K_5(z,w)}\right].
\end{split}
\end{equation}
The factor in front of the big bracket is bounded, so it will suffice
to estimate the norms between various weighted $L^p$ spaces of the operators
\begin{equation}\label{defAj}
A_ju(z)  = \int_\Omega K_j(z,w)u(w)L(dw)
\end{equation} 
with integral kernel $K_j$. Recalling (\ref{8p7}), we get
\begin{equation*}
K_1(z,w)  = \frac{\mathcal{O}(1)}{|k|^{N+1}}\int_{\Omega}
\frac{1}{|z-\zeta||w-\zeta|}\frac{1}{\left(|z-w|+
    |\zeta-\frac{z+w}{2}| +\frac{1}{|k|}\right)^{N+1}}L(d\zeta ).
\end{equation*}
Consider separately the integrals over
$\Omega_1 = \{\zeta\in\Omega;~|\zeta-w|\geq|\zeta - z|\}$ and
$\Omega_2 = \{\zeta\in\Omega;~|\zeta-w|<|\zeta - z|\}$. The two
integrals can be handled similarly, and we only need to consider the
first case $|\zeta - w|\geq|\zeta-z|$. Here
$|\zeta-w|\geq\frac{1}{2}|z-w|$ and the corresponding integral is
\begin{multline*}
  \leq \frac{\mathcal{O}(1)}{|z-w|}\frac{1}{|k|^{N+1}}
  \int_{\Omega_1} \frac{1}{|z-\zeta|}
  \frac{1}{\left(|z-w|+ |\zeta-z| +\frac{1}{|k|}\right)^{N+1}}L(d\zeta
  ) \\
  \leq \frac{\mathcal{O}(1)}{|z-w|}\frac{1}{|k|^{N+1}}
  \int_{\mathbb{C}} \frac{1}{|\zeta|}\frac{1}{\left(|\zeta|
      +\lambda\right)^{N+1}}L(d\zeta ), \end{multline*}
where $\lambda:= |z-w|+1/|k|$. Putting
$$\zeta =
  \lambda\widetilde{\zeta},\ \ L(d\zeta) =
  \lambda^2L(d\widetilde{\zeta}),$$
  gives the upper bound
  \begin{multline*} 
  = \frac{\mathcal{O}(1)}{|z-w|}\frac{1}{|k|^{N+1}}\frac{\lambda^2}{\lambda^{N+2}}\int_{\mathbb{C}} \frac{1}{|\widetilde{\zeta}|}\frac{1}{\left(|\widetilde{\zeta}| +1\right)^{N+1}}L(d\zeta)\\
  = \mathcal{O}(1)\frac{1}{|k||z-w|\left( |k|\lambda \right)^N} = \frac{\mathcal{O}(1)}{|k||z-w|}\frac{1}{\left(|k||z-w|+1\right)^N}\\
  = \mathcal{O}(1)f_{k,N}(z-w).
\end{multline*}
Here
\begin{equation*}
f_{k,N}(z) = \frac{1}{|k||z|(1+|k||z|)^N}, 
\end{equation*} 
and
\begin{multline*}
\int f_{k,N}(z)L(dz) = \int\frac{1}{|k||z|(1+|k||z|)^N}L(dz) =\\ 
\frac{1}{|k|^2}\int\frac{1}{|\widetilde{z}|(1+|\widetilde{z}|)^N}L(d\widetilde{z}) = \mathcal{O}(1)\frac{1}{|k|^2}.
\end{multline*}
We deduce that $K_1$ is bounded  by an $L^1$ convolution kernel, hence
\begin{equation*}
A_1 = \mathcal{O}(1)\frac{1}{|k|^2}: L^p\to L^p, \quad 1\leq p\leq \infty.
\end{equation*}
By (\ref{3.5p7}) we have
$K_2(z,w),~ K_3(z,w) = \frac{\mathcal{O}(1)}{|k||z-w|}$ and it follows
(here we integrate over $\Omega$ in (\ref{defAj})) that for every
bounded set $V\subset \mathbb{C}$,
\begin{equation}
1_VA_2,\ 1_VA_3= \mathcal{O}(|k|^{-1}): L^p\to L^p, \quad 1\leq p\leq \infty.
\label{1p9}
\end{equation}
In fact, $1/|z|$ is integrable on every bounded set.

\par
We next estimate the contribution to $A_2,~ A_3$ from $|z|\gg 1$. For
$j=2,3$ let $C_j=1_{\mathbb{C}\setminus \mathrm{neigh\,}(\overline{\Omega
  })}A_j$. Then 
\begin{equation*}
C_ju(z) = \int_\Omega \widetilde{K}_j(z,w)u(w)L(dw),\ \  |\widetilde{K}_j(z,w)|\leq \frac{1}{\langle z\rangle|k|},
\end{equation*}
\begin{equation*}
|C_ju(z)|\leq \mathcal{O}(1)\langle
z\rangle^{-1}|k|^{-1}||u||_{L^1(\Omega )}\leq\mathcal{O}(1)\langle
z\rangle^{-1}|k|^{-1}||u||_{L^p(\Omega )},\ 1\leq p\leq\infty .
\end{equation*}
Here
$$
\int \langle z\rangle^{-q}L(dz)<\infty \text{ if } q>2,$$
so $$C_j = \mathcal{O}\left(1/|k|\right):L^p\to L^q \text{ if } 1\leq p\leq\infty,~q>2.
$$  
If $q\leq 2$, $\epsilon >0$. Then,
\begin{equation*}
\|\langle \cdot \rangle^{-1-\epsilon }\|_{L^q}^q=\int \langle z \rangle^{-(1+\epsilon)q}L(dz)<\infty
\end{equation*}
iff $(1+\epsilon)q>2$, i.e.\ iff $\epsilon>2/q -1$. When $q=2$, this
amounts to $\epsilon>0$. We conclude that 
\begin{equation*}
C_j = \mathcal{O}_\epsilon(1/|k|):L^p\to \langle\cdot\rangle^{\epsilon}L^q
\text{ when } 1\leq p\leq \infty,~1\leq q\leq 2,~ \epsilon>\frac{2}{q} - 1.
\end{equation*}
Hence for $j=2,3$:
$$
A_j=\mathcal{O}(1/|k|):\begin{cases}L^q\to L^q,\hbox{ when }q>2,\\
L^q\to \langle \cdot \rangle^\epsilon L^q, \hbox{ when }1\le q\le 2,\
\epsilon >2/q-1.
\end{cases}
$$
%%%%%

We next estimate $A_4,~A_5$ with kernels $K_4,~K_5$ in (\ref{1p8}). It suffices to treat $A_4, K_4$ since the expression for $K_5$ is very similar. Let
\begin{equation*}
\Gamma_z = \{\zeta\in \partial\Omega;|\zeta-z|\geq \frac{1}{2}|w-z|\}, 
\quad \Gamma_w = \{\zeta\in \partial\Omega;|\zeta-w|\geq \frac{1}{2}|z-w|\},
\end{equation*}
so that $\partial \Omega\subset\Gamma_z\cup\Gamma_w$. Then 
\begin{equation}\label{1p10}\begin{split}
|K_4(z,w)|&\leq \frac{\mathcal{O}(1)}{|k|}\left( \int_{\Gamma_z} \frac{|d\zeta|}{|z-\zeta||\zeta-w|} + \int_{\Gamma_w} \frac{|d\zeta|}{|z-\zeta||\zeta-w|}\right)\\
&\leq \frac{\mathcal{O}(1)}{|k|}\left( \frac{1}{|w-z|}\int_{\partial \Omega} \frac{|d\zeta|}{|\zeta-w|} + \frac{1}{|w-z|}\int_{\partial \Omega} \frac{|d\zeta|}{|z-\zeta|}\right)\\
&\leq \frac{\mathcal{O}(1)}{|k|}\left( \frac{1}{|w-z|}G(w) + \frac{1}{|w-z|}G(z)\right),
\end{split}
\end{equation}
where
\begin{equation}\label{2p10}
G(z)  = 
\begin{cases}
1 + |\ln d(\partial\Omega, z)|, & z\in \mathrm{neigh}(\overline{\Omega}, \mathbb{C}), \\
1/\langle z\rangle,   & z\in\mathbb{C}\setminus \mathrm{neigh}(\overline{\Omega}, \mathbb{C}) ,
\end{cases}
\end{equation} 
 and $d(\partial\Omega, w)$ denotes the distance between $\partial\Omega$ and $w$. 

$|\cdot -z|^{-\alpha }$ and $G^\beta $ are integrable on
any bounded set when $\beta \ge 0$, $0<\alpha <2$. Choose $\alpha =3/2$ 
and write $\frac{|G(w)|}{|w-z|}$ as the geometric mean
$\left(\frac{1}{|w-z|}^{\frac{3}{2}}\right)^{\frac{2}{3}}\left(G(w)^3
\right)^{\frac{1}{3}} $. Using that geometric means are bounded by
the arithmetic ones, we get 
\begin{equation*}
\frac{1}{|w-z|}G(w)\leq\frac{2}{3}\frac{1}{|w-z|^{3/2}} + 
\frac{1}{3}G(w)^3.
\end{equation*}
Using this and the corresponding estimate with $G(z)$ in (\ref{1p10}) we get
\begin{equation}
1_{\mathrm{neigh}(\overline{\Omega})}(z)|K_j(z,w)|1_\Omega (w)\leq\frac{\mathcal{O}(1)}{|k|}\frac{1}{|z-w|^{\frac{3}{2}}}+\frac{\mathcal{O}(1)}{|k|}G(w)^3 + \frac{\mathcal{O}(1)}{|k|}G(z)^3, 
\label{3p10} 
\end{equation}
when $j = 4$. Clearly the same estimate holds when $j=5$. The first 
term is an $L^1$-convolution kernel (neglecting a region $|z| \gg 1$ 
and recalling that $w\in \Omega $), gives rise to an operator
\begin{equation*}
\mathcal{O}(1/|k|):L^p(\Omega )\to
  L^p(\mathrm{neigh\,(\overline{\Omega} ) ),} \quad p\in[1,\infty]
\end{equation*}
for any bounded neighborhood of $\overline{\Omega }$.
By the H\"older inequality and the fact that $G^\alpha\in L^1, 
\forall \alpha>0$, we see that the second term gives rise to an operator
\begin{equation*}
\mathcal{O}(1/|k|): L^p\to L^\infty,\ \forall p>1.
\end{equation*}
Similarly the third term gives rise to an operator 
\begin{equation*}
\mathcal{O}(1/|k|): L^1(\Omega )\to
  L^q(\mathrm{neigh\,}(\overline{\Omega} ) ), \quad q\in[1,\infty[
\end{equation*}
for any bounded neighborhood of $\overline{\Omega }$. 
Recalling again that we work on a bounded subset  of $\mathbb{C}$ where $L^p\subset L^{q}$ for $1\leq q\leq p \leq \infty$ we conclude that
\begin{equation}
1_{\mathrm{neigh}(\overline{\Omega})}A_4,~1_{\mathrm{neigh}(\overline{\Omega})}A_5 = \mathcal{O}(1/|k|): L^p\to L^p, \quad 1<p<\infty.
\label{1p11}
\end{equation}
For $z\in\mathbb{C}\setminus \mathrm{neigh\,}(\overline{\Omega})$, $w\in \Omega$, (\ref{1p10}) and (\ref{2p10}) give
\begin{equation*}
|K_4(z, w)|\leq\frac{\mathcal{O}(1)}{|k|\langle z\rangle}(1+ |\ln d(w, \partial\Omega) |).
\end{equation*}
This is the same estimate as for $K_2, K_3$ except that the
$1_\Omega (w)$, belonging to all $L^{p'}$ with
$1\leq p' \leq +\infty$, is replaced by
$(1+|\ln(w, \partial\Omega)|)1_{\Omega}(w)$ belonging to all $L^{p'}$
with $1\leq p'<+\infty$. The estimates for $1_{\mathbb{C}\setminus
  \mathrm{neigh\,}(\overline{\Omega })}A_j$, $j=2,3$ extend to
$1_{\mathbb{C}\setminus \mathrm{neigh\,}(\overline{\Omega})}A_4$,
$1_{\mathbb{C}\setminus \mathrm{neigh\,}(\overline{\Omega})}A_5 : L^p\to L^q$,
for $1< p \leq \infty$: For $j = 4, 5,$
\begin{equation*}
1_{\mathbb{C}\setminus \mathrm{neigh\,}(\overline{\Omega})}A_j =
\mathcal{O}(1/|k|):
\begin{cases}L^p\to L^q,\quad 1<p\leq \infty,\ q>2,\\ L^p\to \langle \cdot\rangle^{\epsilon}L^q,\ 1<p\leq \infty,\  1\leq q\leq 2,\ \epsilon>\frac{2}{q}  - 1.\end{cases}
\end{equation*}
Hence with (\ref{1p11})
\begin{equation*}
A_j = \mathcal{O}(1/|k|):
\begin{cases}
L^q\to L^q, &  2<q<+\infty,\\
L^q\to \langle \cdot\rangle^\epsilon L^q, &  1<q \leq 2, \epsilon>\frac{2}{q} -1.
\end{cases}
\end{equation*}
Combining the estimates for $A_j$, $1\leq j \leq 5$, we get 
\begin{theo}\label{AB2}
\begin{equation}
AB = \mathcal{O}(1/|k|):\begin{cases}
L^q\to L^q, &  2<q<+\infty,\\
L^q\to \langle \cdot\rangle^\epsilon  L^q, &  1<q \leq 2,\ \epsilon>\frac{2}{q} -1.
\end{cases}
\label{2p11}
\end{equation}
\end{theo}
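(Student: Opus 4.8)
The plan is to assemble the decomposition $AB = A_1 + A_2 + A_3 + A_4 + A_5$ coming from the Stokes-formula identity (\ref{1p8}), where $A_j u(z) = \int_\Omega K_j(z,w) u(w)\, L(dw)$ with $K_j$ as displayed there, and then read off the mapping properties claimed in (\ref{2p11}) by taking the common part of the five individual estimates established in the body of the section. Concretely, the work splits into five pieces, which I would carry out in the same order as above. First, the remainder term $A_1$: using the bound (\ref{8p7}) on $r^{N+1}$ with any fixed $N\ge 1$, the kernel $K_1(z,w)$ is dominated by the convolution kernel $\mathcal{O}(1) f_{k,N}(z-w)$, and since $\int f_{k,N}\, L(dz) = \mathcal{O}(|k|^{-2})$ by the rescaling computation, Young's inequality gives $A_1 = \mathcal{O}(|k|^{-2}): L^p \to L^p$ for all $1 \le p \le \infty$ — in fact this term is better than needed. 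Second, the "diagonal" terms $A_2, A_3$: by (\ref{3.5p7}) their kernels obey $|K_2(z,w)|, |K_3(z,w)| \le \mathcal{O}(1)/(|k|\,|z-w|)$, so on a bounded neighborhood of $\overline\Omega$ the local integrability of $|z-w|^{-1}$ gives $\mathcal{O}(|k|^{-1}): L^p \to L^p$; the far-field contribution $C_j = 1_{\mathbb{C}\setminus\mathrm{neigh}(\overline\Omega)}A_j$ carries an extra decay $\langle z\rangle^{-1}$, and combined with the weight analysis (when $q \le 2$ one absorbs the slow decay into $\langle\cdot\rangle^\epsilon$ with $\epsilon > 2/q - 1$) one obtains $A_j = \mathcal{O}(|k|^{-1})$ from $L^q$ to $L^q$ for $q > 2$ and to $\langle\cdot\rangle^\epsilon L^q$ for $1 \le q \le 2$.

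Third, the boundary terms $A_4, A_5$, which are the delicate ones and where I expect the main obstacle to lie. Here the kernel is an integral over $\partial\Omega$ and carries a logarithmic singularity: splitting $\partial\Omega = \Gamma_z \cup \Gamma_w$ as in (\ref{1p10}) and using that $\int_{\partial\Omega} |d\zeta|/|\zeta - z| \le G(z)$ with $G$ as in (\ref{2p10}), one gets $|K_4(z,w)| \le \mathcal{O}(|k|^{-1})(G(w) + G(z))/|z-w|$ on $\mathrm{neigh}(\overline\Omega)$. The trick to handle the product of the mild singularity $G$ against the singularity $|z-w|^{-1}$ is to split via the arithmetic–geometric mean inequality, writing $G(w)/|z-w| \le \tfrac23 |z-w|^{-3/2} + \tfrac13 G(w)^3$ (and symmetrically in $z$); the first summand is an $L^1$ convolution kernel on a bounded set, giving $\mathcal{O}(|k|^{-1}): L^p \to L^p$, while the $G(w)^3$ term gives $L^p \to L^\infty$ for $p > 1$ (Hölder plus $G^\alpha \in L^1_{loc}$) and the $G(z)^3$ term gives $L^1 \to L^q$ for $q < \infty$; the restriction $1 < p < \infty$ in (\ref{1p11}) is exactly what survives all three. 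For the far field $z \notin \mathrm{neigh}(\overline\Omega)$ the bound becomes $|K_4(z,w)| \le \mathcal{O}(|k|^{-1}) \langle z\rangle^{-1}(1 + |\ln d(w,\partial\Omega)|)$, which is the same as for $K_2, K_3$ save that the $L^{p'}$-membership of $1_\Omega$ is replaced by that of $(1 + |\ln d(w,\partial\Omega)|)1_\Omega$, still in every $L^{p'}$ with $p' < \infty$; tracking this through the same weighted estimates gives $1_{\mathbb{C}\setminus\mathrm{neigh}(\overline\Omega)}A_j = \mathcal{O}(|k|^{-1})$ from $L^p$ ($1 < p \le \infty$) to $L^q$ for $q > 2$ and to $\langle\cdot\rangle^\epsilon L^q$ for $q \le 2$, $\epsilon > 2/q - 1$.

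Finally, I would collect the five estimates. The bottleneck in the domain of $p$ is $A_4, A_5$, which force $p > 1$; the bottleneck in the target space is again $A_4, A_5$ (and $A_2, A_3$) when $q \le 2$, forcing the weight $\langle\cdot\rangle^\epsilon$ with $\epsilon > 2/q - 1$. Intersecting, and noting that we only claim a map $L^q \to (\text{weighted }) L^q$ so that $p = q$, yields precisely (\ref{2p11}): $AB = \mathcal{O}(|k|^{-1}): L^q \to L^q$ for $2 < q < \infty$ and $AB = \mathcal{O}(|k|^{-1}): L^q \to \langle\cdot\rangle^\epsilon L^q$ for $1 < q \le 2$, $\epsilon > 2/q - 1$. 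The one place that genuinely requires care, rather than bookkeeping, is the arithmetic–geometric-mean splitting for the boundary kernels: one must check that the exponent $3/2 < 2$ keeps $|z-w|^{-3/2}$ locally integrable while $G^3$ stays in $L^1_{loc}$, and that no choice of Hölder exponents there collides with the endpoint $p = 1$ or $q = \infty$.
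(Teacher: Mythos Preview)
Your proposal is correct and follows essentially the same route as the paper: the Stokes-formula decomposition (\ref{1p8}) into $A_1,\dots,A_5$, the convolution estimate for $A_1$ via $f_{k,N}$, the near/far splitting for $A_2,A_3$, and the arithmetic--geometric-mean trick (\ref{3p10}) for the boundary pieces $A_4,A_5$, with the final intersection of ranges producing exactly (\ref{2p11}). The only detail you leave implicit is the bounded prefactor $\sigma e^{kw-\overline{kw}}/(8\pi^2 i)$ in front of the bracket in (\ref{1p8}), but since it has unit-size modulus it does not affect the operator bounds.
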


\section{Back to the reflection coefficient}\label{bref}
\setcounter{equation}{0}
Recall (\ref{reflc})
\begin{equation}\label{bref.1}
    \overline{R} = 
    \frac{2\sigma}{\pi}\int_{\mathbb{C}}^{}\mathrm{e}^{kz-\overline{kz}}\overline{q}(z)\phi_{1}(z;k)L(\mathrm{d}z),
  \end{equation}
  where $\phi _1=1+\phi ^1_1$ and we assume $q=1_\Omega $ where
  $\Omega \Subset \mathbb{C}$ is open with strictly convex smooth boundary. 
  $\phi _1^1$ is given by (\ref{sy.41}),
  \begin{equation}\label{bref.2}
\phi _1^1= (1-AB)^{-1}AB(1)=AB(1)+(AB)^2(1)+...
\end{equation}
and $A$, $B$ are given in (\ref{sy.16}), now with $q=1_\Omega $
\begin{equation}\label{bref.2.5}
\begin{cases}A=E\widehat{\tau }_{-\omega }\frac{h}{2}1_\Omega ,\\
B=\sigma F\widehat{\tau }_\omega \frac{h }{2}1_\Omega\ (=\sigma\overline{A})
\end{cases} .
\end{equation}

\par By Theorem \ref{AB2} we know that $1_\Omega AB=\mathcal{
 O}(1/|k|):L^2(\Omega )\to L^2(\Omega )$ and we shall frequently use
that $A=A\circ 1_\Omega $, $B=B\circ 1_\Omega $. Combining
(\ref{bref.1}), (\ref{bref.2}), we get (assuming $\sigma =1$ for
simplicity)
\begin{equation}\label{bref.3}
\overline{R}=\frac{2}{\pi }\sum_{\nu =0}^{N-1}\int_\Omega
e^{kz-\overline{kz}}(AB)^\nu (1)L(dz)+\mathcal{ O}(|k|^{-N}),
\end{equation}
for every $N=1,2,...$. Here
$$\left|\int_\Omega e^{kz-\overline{kz}}(AB)^\nu (1)L(dz)\right|\le
\mathrm{vol\,}(\Omega )^{1/2}\| (AB)^\nu (1)\|_{L^2(\Omega )}\le C
(C/|k|)^\nu $$
and we shall see that this estimate can be improved by using more
information about $A=A_k$ from Section 5 in \cite{KlSjSt20} in the 
case when $\partial \Omega$ is analytic. In remark \ref{remark} we 
explain how to extend the discussion to the case when $\partial\Omega$ 
is merely smooth. 

\par First, recall from (\ref{bref.2.5}) and the explicit formula for
the fundamental solution of $\overline{\partial } $ appearing in $E$,
that
\begin{equation}\label{bref.4}
Au(z)=\frac{1}{2\pi }\int_\Omega e^{-kw+\overline{kw}}\frac{1}{z-w}u(w)L(dw),
\end{equation}
or
\begin{equation}\label{bref.5}
A=A_k=A_0\circ
e^{-k\cdot +\overline{k\cdot }},\end{equation}
where
\begin{equation}\label{bref.6}
A_0u(z)=\frac{1}{2\pi }\int_\Omega \frac{1}{z-w}u(w)L(dw).
\end{equation}
$A_0$ is anti-symmetric for the standard bilinear scalar product on
$L^2$; $A_0^{\mathrm{t}}=-A_0$, so the transpose of $A_k$ is given by
\begin{equation}\label{bref.7}
A_k^{\mathrm{t}}=-e^{-k\cdot +\overline{k\cdot }}A_0=-e^{-k\cdot
  +\overline{k\cdot }}A_{-k}e^{-k\cdot +\overline{k\cdot }} ,
\end{equation}
and $B_k$ is the complex conjugate of $A_k$ (here $\sigma=1$ for 
simplicity):
$$
B_k=\overline{A_k}.
$$

\par In \cite{KlSjSt20} we studied the function
\begin{equation}\label{bref.8}
f(z,k)=2\pi A_k(1_\Omega )(z)=\int_\Omega 
e^{-kw+\overline{kw}}\frac{1}{z-w}L(dw),
\end{equation}
when $\partial\Omega$ is analytic.  

\par By (5.32), (5.31) in \cite{KlSjSt20}, we have
\begin{equation}\label{bref.9}
  f(z,k )=\frac{1}{2i\overline{k}}F(z)+(\pi
/\overline{k})\left(e^{-iu(z,k )}(1_{\Omega _-}(z)-1_{\Omega
    _+}(z))+e^{-kz+\overline{kz}}1_\Omega (z) \right),
\end{equation}
where
\begin{equation}\label{bref.10}
F(z)=F_\Gamma (z)=\int_\Gamma \frac{1}{z-w}e^{-iu(w,k )}dw.
\end{equation}
Here the deformation $\Gamma $ of $\partial \Omega $ and the domains
$\Omega _\pm$ are defined in \cite[Section 5]{KlSjSt20}, see 
Fig.~\ref{contour}. Further $-iu(\cdot
,k)$ is the holomorphic extension of $-k\cdot +\overline{k\cdot }$
from $\partial \Omega $ to a neighborhood.
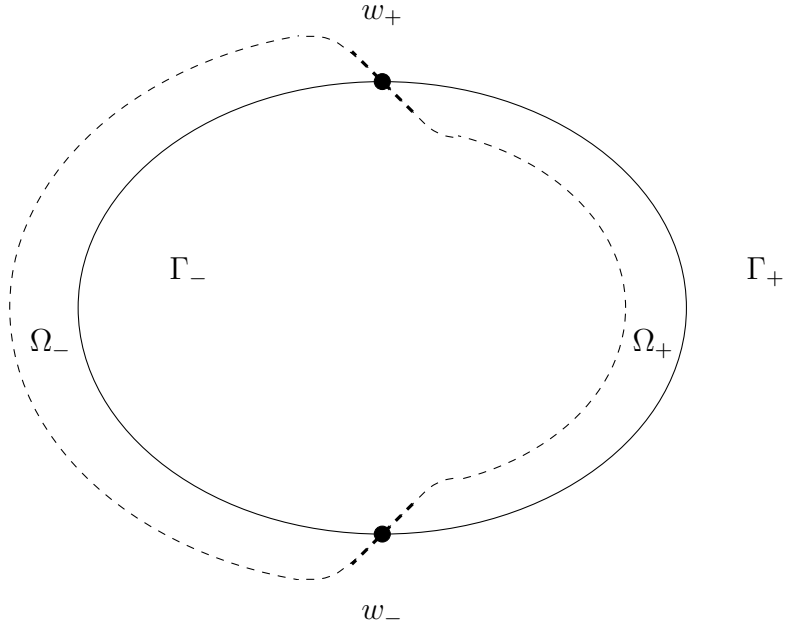
\begin{figure}[htb!]
\begin{tikzpicture}

%\draw (0,3) [gray] circle[x radius=0.3 cm, y radius = 0.3cm];
%\draw [gray] (-.2,3.2) -- (3.0,5.8);
%\draw [gray] (0,2.7) -- (4,2.2);
%\draw  (4.2, 4.2) [gray]  circle [radius = 2cm];
%\draw  [gray](4.2,2.2)--(4.2,6.2);
%\draw [very thick, dashed] (3,5.8) -- (5.4,2.6);
%\draw[thick, black ] (2.2,4.2)--(6.2,4.2);
%\node at (4.2, 4.2)   (zoomnode) {};
%\node [below left = 0.0 cm and 0.5 cm of zoomnode](label){$Q_1$};
%\node [below right = 0.0 cm and 0.8 cm of zoomnode](label){$Q_2$};
%%\node [zoomnode](label){$\w_+$};
%\node [above left = 0.0 cm and 0.5 cm of zoomnode](label){$Q_3$};
%\node [above right = 0.0 cm and 0.8 cm of zoomnode](label){$Q_4$};
%\node [above left = 1.5 cm and 1.5 cm of zoomnode](label){$D_0$};

%\filldraw (4.2, 4.2) circle (3pt) node[align=center, below left = -0.6 cm and -0.1 cm] {\\ \\ $w_+$} ;

\draw (0,0)  circle [x radius=4 cm, y radius = 3cm];

\draw[very thick, black, dashed] (0.4,2.6)--(-.4,3.4);
\draw[very thick, black, dashed] (0.4,-2.6)--(-.4,-3.4);

\node (node) {};
\node [above left = 0 cm and 2 cm of node](label){$\Gamma_-$};
\node [above right = 0 cm and 4.5 cm of node](label){$\Gamma_+$};
\node [below right = 0 cm and 3 cm of node](label){$\Omega_+$};
\node [below left = 0 cm and 3.8 cm of node](label){$\Omega_-$};

\begin{scope}
    \clip (-1.1, -3.6) rectangle (-5, 8);
    \draw(0,0)[dashed] circle   [x radius=4.9 cm, y radius = 3.7cm];
\end{scope}

\begin{scope}
    \clip (1.,-3) rectangle (6,6);
    \draw(-.0,0)[dashed] circle   [x radius=3.2 cm, y radius = 2.4cm];
\end{scope}

\filldraw 
(0,+ 3) circle (3pt) node[align=center, above] {$w_+$\\ \vspace{.2cm}}

(0, -3) circle (3pt) node[align=center, below] {\\ \\ $w_-$} ;
\draw[dashed,rounded corners=5] (-1,-3.6)--(-.7,-3.6)--(-.3, -3.3)--(0,-3)--(.3,-2.7)--(.7,-2.3) --(1,-2.25);
\draw[dashed,rounded corners=5] (-1,3.6)--(-.7,3.6)--(-.3, 3.3)--(0,3)--(.3, 2.7)--(.7,2.3) --(1,2.25);
\end{tikzpicture}

 \caption{Real analytic strictly convex boundary $\partial \Omega$ (solid) of some domain 
 $\Omega$  and the deformed contour $\Gamma$ (dashed) for this 
 example.  }
 \label{contour}
\end{figure}

\par In (5.69), (5.70) in \cite{KlSjSt20} we have seen that
\begin{equation}\label{bref.11}
F(z)=\mathcal{ O}(1)(|z-w_+(k)||k|^{1/2}+1)^{-1}+\mathcal{ O}(1)(|z-w_-(k)||k|^{1/2}+1)^{-1},
\end{equation}
where $w_+(k),\, w_-(k)\in \partial \Omega $ are the North and South
poles, determined by the fact that the interior unit normal of
$\partial \Omega $ at $w_\pm (k)$ is of the form $c_\pm \omega $ for
some $c_\pm$ with $\pm c_\pm >0$. Here $\omega \in \mathbb{R}^2\simeq
\mathbb{C}$ is determined by $kz-\overline{kz}=\mathrm{i}|k|\Re (z\overline{\omega
})$ for all $z\in z$.

\par We deduced that
\begin{equation}\label{bref.12}
\| F\|_{L^2(\Omega )}=\mathcal{ O}(1)\frac{(\ln |k|)^{1/2}}{|k|^{1/2}},
\end{equation}
\begin{equation}\label{bref.13}
\| F/(2i\overline{k})\|_{L^2(\Omega )}=\mathcal{ O}(1)\frac{(\ln |k|)^{1/2}}{|k|^{3/2}},
\end{equation}
and using (\ref{bref.11}) we now add the observation that 
\begin{equation}\label{bref.14}
\| F/(2i\overline{k})\|_{L^1(\Omega )}=\mathcal{ O}(1)\frac{1}{|k|^{3/2}}.
\end{equation}

We next estimate $(\pi /\overline{k})e^{-iu(\cdot ,k)}1_{\Omega _+}$,
appearing in (\ref{bref.9}). (Notice that $e^{-iu(\cdot ,k)}1_{\Omega
  _-}$ is absent, since we restrict the attention to $\Omega $ and
$\Omega _-\cap \Omega =\emptyset $.) In (5.75) in \cite{KlSjSt20} we
found that
\begin{equation}\label{bref.15}
  \left\|
(\pi
/\overline{k})e^{-iu(\cdot ,k )}1_{\Omega _+}
  \right\|_{ L^2(\Omega )}=\frac{\mathcal{ O}(1)(\ln |k|)^{1/2}}{|k|^{3/2}},
\end{equation}
and we shall now apply the same procedure to the $L^1$-norm. We
restrict the attention to the contribution to the $L^1$-norm from a
neighborhood of one of the poles, say $w_+(k)$. (Away from such
neighborhoods, the estimates are simpler and lead to a stronger
conclusion.) In suitable coordinates $\mu =t+is$ we have
$$(\pi
/\overline{k})e^{-iu(\cdot ,k )}1_{\Omega _+}=\mathcal{
  O}(k^{-1})e^{-|k|ts/C},\ 0\le t\le 1/\mathcal{ O}(1),\ 0\le s\le t.$$
(Away from a neighborhood of $\{w_+(k),w_-(k) \}$ we have the same
estimate, now for $ 1/\mathcal{ O}(1) \le t\le \mathcal{ O}(1)$, $\ 0\le
s\le 1/\mathcal{ O}(1)$.) The contribution to the $L^1$-norm of $(\pi
/\overline{k})e^{-iu(\cdot ,k )}1_{\Omega _+}$ is
\begin{multline*}
\mathcal{ O}(|k|^{-1})\int_0^1\int_0^t e^{-ts|k|/C}dsdt=
\mathcal{ O}(|k|^{-1})\int_0^1\frac{1}{t|k|}\left(1 - e^{-t^2|k|/C}
\right)dt \\ 
=\frac{\mathcal{ O}(1) }{|k|}\left( \int_0^{|k|^{-1/2}}tdt 
  +\int_{|k|^{-1/2}}^1\frac{1}{t|k|} dt \right)=
\frac{\mathcal{ O}(1)\ln |k|}{|k|^2}.
\end{multline*}
The contribution from $\overline{\Omega }\setminus \mathrm{neigh\,}(\{w_+,w_-
\})$ to the $L^1$-norm is $\mathcal{ O}(1)|k|^{-2}$. Hence
\begin{equation}\label{bref.16}
  \left\|
(\pi
/\overline{k})e^{-iu(\cdot ,k )}1_{\Omega _+}
  \right\|_{ L^1(\Omega )}=\frac{\mathcal{ O}(1)\ln |k|}{|k|^2}.
\end{equation}

\par Combining (\ref{bref.9}), (\ref{bref.13}), (\ref{bref.15}), we
get as in \cite{KlSjSt20} that
\begin{equation}\label{bref.17}
A_k(1_\Omega )=\frac{1}{2\overline{k}}e^{-kz+\overline{kz}}1_\Omega +r(z,k),
\end{equation}
i.e.,
\begin{equation}\label{bref.25}
r=\frac{F}{4\pi i\overline{k}}-\frac{1}{2\overline{k}}e^{-iu}1_{\Omega
_+}\hbox{ in }\Omega ,
\end{equation}
where
\begin{equation}\label{bref.18}
r(\cdot ,k)=\mathcal{ O}(1)|k|^{-3/2}(\ln |k|)^{1/2}\hbox{ in }L^2(\Omega
) .
\end{equation}
Using (\ref{bref.14}), (\ref{bref.16}) instead of (\ref{bref.13}),
(\ref{bref.15}), we get
\begin{equation}\label{bref.19}
\| r(\cdot ,k)\|_{L^1(\Omega )}=\mathcal{ O}(1)|k|^{-3/2}.
\end{equation}

\par We now return to the expansion (\ref{bref.3}) for $\overline{R}$,
and start with the term for $\nu =1$. Let
$$
\langle u|v\rangle =\int_\Omega u(z)v(z)L(dz)
$$
denote the bilinear $L^2$ scalar product. We get with $A=A_k$, $B=B_k$
if nothing else is indicated,
\begin{multline*}
\frac{2}{\pi }\int_\Omega e^{kz-\overline{kz}}AB(1_\Omega
)(z)L(dz)=\frac{2}{\pi} \langle AB(1_\Omega )|e^{k\cdot
  -\overline{k\cdot }}1_\Omega \rangle \\
=\frac{2}{\pi }\langle B(1_\Omega )|A^{\mathrm{t}}e^{k\cdot
  -\overline{k\cdot }}(1_\Omega )\rangle = -\frac{2}{\pi }\langle
B_k(1_\Omega )| e^{-k\cdot +\overline{k\cdot }}A_{-k}(1_\Omega
)\rangle ,
\end{multline*}
where we used (\ref{bref.7}) in the last step. Applying
(\ref{bref.17}) to $A_{-k}(1_\Omega )$ and the fact that
$B_k(1_\Omega )=\overline{A_k(1_\Omega )}$, we get
\begin{multline}\label{bref.20}
\frac{2}{\pi }\int_\Omega e^{kz-\overline{kz}}AB(1_\Omega
)(z)L(dz)\\
=-\frac{2}{\pi }\langle \frac{1}{2k}e^{-\overline{k\cdot } +k\cdot
}1_\Omega +\overline{r(\cdot ,k)}|e^{-k\cdot +\overline{k\cdot
  }}(-\frac{1}{2\overline{k}}e^{k\cdot -\overline{k\cdot }}1_\Omega
+r(\cdot ,-k))\rangle \\
=\frac{2}{\pi }\frac{1}{4|k|^2}\int_\Omega
e^{kz-\overline{kz}}L(dz)-
\frac{2}{\pi }\int_\Omega \frac{1}{2k}r(z,-k)L(dz)\\+\frac{2}{\pi
}\int_\Omega \overline{r(z,k)}\frac{1}{2\overline{k}}L(dz)
-\frac{2}{\pi }\int_\Omega e^{-kz+\overline{kz}}\overline{r(z,k)}r(z,-k)L(dz).
\end{multline}

\par As we have already seen, the integral in the first term in the
last member is $\mathcal{ O}(|k|^{-3/2})$, so this term is $\mathcal{
  O}(|k|^{-7/2})$. By (\ref{bref.18}) the last term in (\ref{bref.20})
is $\mathcal{ O}(|k|^{-3}\ln |k|)$. Thus (\ref{bref.20}) gives
\begin{equation}\label{bref.21}\begin{split}
&\frac{2}{\pi }\int_\Omega e^{kz-\overline{kz}}AB(1_\Omega
)(z)L(dz)\\
&=-
\frac{2}{\pi }\int_\Omega \frac{1}{2k}r(z,-k)L(dz)+\frac{2}{\pi
}\int_\Omega \overline{r(z,k)}\frac{1}{2\overline{k}}L(dz)
+\mathcal{ O}(|k|^{-3}\ln |k|).
\end{split}
\end{equation}
(\ref{bref.19}) now yields
\begin{equation}\label{bref.22}
\frac{2}{\pi }\int_\Omega e^{kz-\overline{kz}}AB(1_\Omega
)(z)L(dz)=\mathcal{ O}(|k|^{-5/2}).
\end{equation}

\par Before studying the leading asymptotics of the integrals in the
left hand side of(\ref{bref.21}), we shall gain a power of $k$ in the
estimate of the general term in (\ref{bref.3}) for $\nu \ge 2$:
\begin{multline}\label{bref.23}
\frac{2}{\pi }\int_\Omega e^{kz-\overline{kz}}(AB)^\nu (1_\Omega
)(z)L(dz)
=\frac{2}{\pi }\int_\Omega e^{kz-\overline{kz}}A(BA)^{\nu -1}B
(1_\Omega )(z)L(dz)\\
=-\frac{2}{\pi }\langle (BA)^{\nu -1}B(1_\Omega )|e^{-k\cdot
  +\overline{k\cdot }}A_{-k}(1_\Omega )\rangle =\mathcal{ O}(1)|k|^{1-\nu
}|k|^{-1}|k|^{-1}\\=\mathcal{ O}(|k|^{-\nu -1})= \mathcal{ O}(|k|^{-3}),
\end{multline}
since $(BA)^{\nu -1}=\mathcal{ O}(|k|^{1-\nu }):\, L^2(\Omega )\to
L^2(\Omega )$ and $B(1_\Omega ),\, A_{-k}(1_\Omega )=\mathcal{ O}(1/|k|)$
in $L^2(\Omega ).$

\par Combining (\ref{bref.3}), (\ref{bref.22}), (\ref{bref.23}), we
get
\begin{equation}\label{bref.24}
  \overline{R}=\frac{2}{\pi }\int_\Omega e^{kz-\overline{kz}}L(dz)+
  \frac{2}{\pi }\int_\Omega e^{kz-\overline{kz}}AB(1_\Omega
  )L(dz)+\mathcal{ O}(|k|^{-3}),
\end{equation}
and in particular,
\begin{equation}\label{bref.24.5}
  \overline{R}=\frac{2}{\pi }\int_\Omega e^{kz-\overline{kz}}L(dz)+
  \mathcal{ O}(|k|^{-5/2}).
\end{equation}

\par We next study the second term in the right hand side of
(\ref{bref.24}), starting from (\ref{bref.21}). 
By (\ref{bref.16}) and (\ref{bref.25}) we have
\begin{equation}\label{bref.26}
r=\frac{F}{4\pi i\overline{k}}+\frac{\mathcal{O}(1)\ln |k|}{|k|^2} \hbox{
in }L^1(\Omega ),
\end{equation}
where $F=F(z,k)$. Using this in (\ref{bref.21}), we get
\begin{multline}\label{bref.27}
\frac{2}{\pi }\int_\Omega e^{kz-\overline{kz}}AB(1_\Omega
)(z)L(dz)=\\
\frac{2}{\pi }\int_\Omega \frac{1}{2k}\frac{F(z,-k)}{4i\pi
  \overline{k}}L(dz)+
\frac{2}{\pi }\int_\Omega \frac{\overline{F}(z,k)}{-4i\pi
  k}\frac{1}{2\overline{k}}L(dz)+\mathcal{ O}(|k|^{-3}\ln |k| )\\
=\frac{1}{4i\pi ^2|k|^2}\int_\Omega (F(z,-k)-\overline{F}(z,k))L(dz)+
\mathcal{ O}(|k|^{-3}\ln |k| ).
\end{multline}
Here we recall (\ref{bref.10}) for $F(z,k)=F_\Gamma (z)$, where
$\Gamma =\Gamma (k)$ is a deformation of $\partial \Omega $ passing
through the poles $w_+(k)$, $w_-(k)$, situated outside
$\overline{\Omega }$ near the boundary segment $\Gamma _-$ from
$w_+(k)$ to $w_-(k)$ and inside $\Omega $ near the boundary segment
$\Gamma _+$ from $w_-(k)$ to $w_+(k)$ (when following the boundary
with the positive orientation). This choice is given by the method of
steepest descent for $e^{-iu(\cdot ,k)}$. When replacing $k$ with
$-k$, we have $e^{-iu(z,-k)}=e^{iu(z,k)}$ and
$w_{\pm}(-k)=w_{\mp}(k)$. Correspondingly, $\Gamma $ should be replaced
by a contour $\widetilde{\Gamma }$ which is a deformation of $\partial
\Omega $ inwards near the segment $\Gamma _-$ from $w_+(k)$ to
$w_-(k)$ and outwards near the segment $\Gamma _+$ from $w_-(k)$ to
$w_+(k)$.

\par We get
\begin{multline}\label{bref.28}
\int_\Omega F(z,-k)L(dz)=\int_\Omega \int_{\widetilde{\Gamma
  }}\frac{1}{z-w}e^{iu(w,k)}dw L(dz)\\
= \int_{\widetilde{\Gamma
  }} \int_\Omega \frac{1}{z-w}L(dz) e^{iu(w,k)}dw
=-\pi \int_{\widetilde{\Gamma }}D_\Omega (w) e^{iu(w,k)}dw,
\end{multline}
using that $\frac{1}{z-w}e^{iu(w,k)}$ is integrable on $\Omega \times
\widetilde{\Gamma }$ for the measure $L(dz)|dw|$. Here
\begin{equation}\label{bref.29}
D_\Omega (z)=\frac{1}{\pi }\int_\Omega \frac{1}{z-w}L(dw)
\end{equation}
is the solution to the $\overline{\partial }$-problem:
\begin{equation}\label{bref.30}
  \begin{cases}\partial _{\overline{z}}D_\Omega =1_\Omega ,\\
    D_\Omega (z)\to 0,\ z\to \infty .
\end{cases}
\end{equation}
Similarly,
\begin{multline}\label{bref.31}
-\int_\Omega \overline{F(z,k)}L(dz)=\overline{\int_\Omega \int_\Gamma
  \frac{1}{w-z}e^{-iu(w,k)}dwL(dz)}\\
=\overline{\int_\Gamma \int_\Omega
  \frac{1}{w-z}L(dz)e^{-iu(w,k)}dw}
=\pi \overline{\int_\Gamma D_\Omega (w)e^{-iu(w,k)}dw}.
\end{multline}
Using (\ref{bref.28}), (\ref{bref.31}) in (\ref{bref.27}), we get
\begin{multline}\label{bref.32}
\frac{2}{\pi }\int_\Omega e^{kz-\overline{kz}}AB(1_\Omega
)(z)L(dz)=\\
\frac{1}{4i\pi |k|^2}\left( -\int_{\widetilde{\Gamma }}D_\Omega
  (w)e^{iu(w,k)}dw+\overline{\int_\Gamma D_\Omega (w)e^{-iu(w,k)}dw}
\right)+\mathcal{ O}(|k|^{-3}\ln |k| ).
\end{multline}
\begin{remark}
	It is not obvious whether the error term in (\ref{bref.32}) is 
	optimal or a consequence of the applied technique to prove the 
	result. 
\end{remark}

\begin{prop}\label{bref1}
$D_\Omega \in C(\mathbb{C})$ and the restrictions of this function to the
open sets $\Omega $ and $\mathbb{C}\setminus \overline{\Omega }$ extend
to functions in $C^\infty (\overline{\Omega })$ and $C^\infty ({\bf
  C}\setminus \Omega )$ respectively.
\end{prop}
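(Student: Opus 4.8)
The plan is to reduce $D_\Omega$, away from $\partial\Omega$, to a Cauchy-type integral over $\partial\Omega$ with a $C^\infty$ density, and then to invoke the classical regularity theory for such integrals; only the smoothness of $\partial\Omega$ and the boundedness of $\Omega$ will be used, convexity playing no role here. It is convenient to separate the three assertions: (i) $D_\Omega\in C(\mathbb C)$; (ii) $D_\Omega|_\Omega$ extends to an element of $C^\infty(\overline\Omega)$; (iii) $D_\Omega|_{\mathbb C\setminus\overline\Omega}$ extends to an element of $C^\infty(\mathbb C\setminus\Omega)$. Assertion (i) is the standard continuity of a solid Cauchy transform: since $D_\Omega(z)=\frac1\pi\int_\Omega(z-w)^{-1}L(dw)$ with $1_\Omega$ bounded and compactly supported, one splits the integral over $\{|z-w|\le\delta\}$ (contribution $\mathcal O(\delta)$) and $\{|z-w|>\delta\}$, compares the values at $z$ and $z'$, and optimizes in $\delta$ to get local H\"older continuity on $\mathbb C$ (indeed $C^{0,\alpha}$ for every $\alpha<1$); this step is independent of the boundary regularity.

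For (ii) and (iii) the heart of the matter is a boundary representation. Applying the Cauchy--Pompeiu formula to the $C^\infty$ function $u(w)=\overline w$ on $\Omega$, and using $\partial_{\overline w}u\equiv1$ together with $\frac1\pi\int_\Omega(w-z)^{-1}L(dw)=-D_\Omega(z)$, one obtains, for every $z\in\mathbb C\setminus\partial\Omega$,
\begin{equation*}
  D_\Omega(z)=\overline z\,1_\Omega(z)+H(z),\qquad
  H(z):=\frac{1}{2\pi i}\int_{\partial\Omega}\frac{\overline w}{z-w}\,dw,
\end{equation*}
the case $z\notin\overline\Omega$ (where the left-hand side of Cauchy--Pompeiu vanishes) yielding $D_\Omega=H$ there. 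Thus $D_\Omega$ differs from $H$ --- which is holomorphic on $\mathbb C\setminus\partial\Omega$ and tends to $0$ at $\infty$ --- only by $\overline z\,1_\Omega$, whose restriction to $\overline\Omega$ is the $C^\infty$ function $\overline z$ and whose restriction to $\mathbb C\setminus\overline\Omega$ is $0$. Hence (ii) and (iii) follow once one knows that $H|_\Omega$ extends to an element of $C^\infty(\overline\Omega)$ and $H|_{\mathbb C\setminus\overline\Omega}$ to an element of $C^\infty(\mathbb C\setminus\Omega)$.

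This last fact is the expected main obstacle: it is the classical statement that a Cauchy-type integral with a $C^\infty$ density on a $C^\infty$ closed curve has $C^\infty$ one-sided boundary limits (Plemelj, Privalov, Muskhelishvili, Gakhov), and it is the only point that is not pure bookkeeping. To keep the argument self-contained I would localize near a point $w_0\in\partial\Omega$ with a partition of unity, straighten $\partial\Omega$ there by a local $C^\infty$ diffeomorphism, and observe that in the new variables $H$ equals, modulo a term manifestly smooth up to the boundary, a one-dimensional Cauchy integral $\frac1{2\pi i}\int_{\mathbb R}\rho(t)(z-t)^{-1}\,dt$ with $\rho\in C^\infty_0(\mathbb R)$; by the Sokhotski--Plemelj relations its two one-sided limits on $\mathbb R$ are $\pm\tfrac12\rho(x)+\tfrac1{2\pi i}\,\mathrm{p.v.}\!\int_{\mathbb R}\rho(t)(x-t)^{-1}\,dt$, both of class $C^\infty$ since $\rho$ and its Hilbert transform are. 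Patching the local conclusions gives the smoothness of $H$ from each side, hence (ii)--(iii). As a by-product, the Plemelj jump $H_+-H_-=-\overline w$ across $\partial\Omega$ cancels exactly the jump of $\overline z\,1_\Omega$, which is an alternative route to (i); the structural identity $D_\Omega=\overline z\,1_\Omega+H$ does all the conceptual work, the rest being the routine flattening/Hilbert-transform estimate.
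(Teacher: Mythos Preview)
Your argument is correct, and the route is genuinely different from the one taken in the paper. You encode the whole problem in the single identity
\[
D_\Omega(z)=\overline{z}\,1_\Omega(z)+H(z),\qquad H(z)=\frac{1}{2\pi i}\int_{\partial\Omega}\frac{\overline{w}}{z-w}\,dw,
\]
obtained from Cauchy--Pompeiu applied to $u(w)=\overline w$, and then appeal to the classical Plemelj--Privalov/Muskhelishvili regularity for Cauchy-type integrals with $C^\infty$ density on a $C^\infty$ curve. This is elegant and structural; as a bonus, the identity immediately yields the explicit formula $D_{D(0,1)}(z)=\overline z$ for $|z|\le 1$ and $=1/z$ for $|z|\ge 1$ used later in the paper, by a residue computation for $H$. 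The paper, by contrast, avoids invoking that classical theory: it differentiates $D_\Omega$ directly, uses Stokes' formula to convert each $\partial_z^{n+1}D_\Omega$ into a boundary integral $\frac{1}{2\pi i}\int_{\partial\Omega}\frac{1}{z-w}\,d\overline w$ (and iterates by integration by parts along $\partial\Omega$), obtains $\mathcal{O}(|\ln d(z,\partial\Omega)|)$ bounds on these derivatives, and integrates back to boundedness; the interior case is reduced to the exterior one by subtracting the solid Cauchy transform of a cutoff $\chi$ equal to $1$ near $\overline\Omega$. The paper's computation is thus entirely self-contained, while yours trades that for a cleaner reduction but at the price of importing a nontrivial black box. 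One small caveat in your sketch: a general $C^\infty$ diffeomorphism does not turn $1/(z-w)$ into $1/(z'-t)$; what survives the change of variables is a kernel $1/(z-\gamma(t))$ whose comparison with the flat Cauchy kernel requires the standard factorization $\gamma(s)-\gamma(t)=(s-t)\,a(s,t)$ with $a$ smooth and nonvanishing. This is routine, but it is the step where the ``modulo a term manifestly smooth up to the boundary'' claim is actually earned.
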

\begin{proof}
The continuity of $D_\Omega $ is clear. We first look at $D_\Omega
(z)$ in $\mathbb{C}\setminus \overline{\Omega }$. Here $D_\Omega (z)$ is
holomorphic and
\begin{equation}\label{bref.33}\begin{split}
\partial _zD_\Omega (z)=&\frac{1}{\pi }\int_\Omega \partial
_z\left(\frac{1}{z-w} \right)L(dw)\\
=&-\frac{1}{\pi }\int_\Omega \partial _w \left(\frac{1}{z-w}
\right)\frac{d\overline{w}\wedge dw}{2i}=\frac{1}{2\pi i}\int_\Omega
d_w\left(\frac{1}{z-w} d\overline{w} \right).
\end{split}
\end{equation}
By Stokes' formula,
\begin{equation}\label{bref.34}
\partial _zD_\Omega (z)=\frac{1}{2\pi i}\int_{\partial \Omega
}\frac{1}{z-w}d\overline{w}=
\frac{1}{2\pi i}\int_0^L\frac{1}{z-\gamma (t)}\overline{\dot{\gamma }(t)}dt
\end{equation}
where $\gamma :[0,L[\ni t\mapsto \gamma (t)\in \partial \Omega $ is a
smooth positively oriented parametrization of $\partial \Omega $.

\par It follows that
\begin{equation}\label{bref.35}
\partial _zD_\Omega (z)=\mathcal{ O}(1)|\ln d(z,\partial \Omega )|,\ z\in
(\mathbb{C}\setminus \overline{\Omega})\cap \mathrm{neigh\,}(\partial
\Omega ),
\end{equation}
where $d(z,\partial \Omega )$ denotes the distance from $z$ to
$\partial \Omega $.

\par For $n=1,2,...$, we apply $\partial _z^n$ to (\ref{bref.34}):
\begin{multline}\label{bref.36}
\partial _z^{n+1}D_\Omega (z)=\frac{1}{2\pi i}\int_{\partial \Omega
}\partial _z^n \frac{1}{z-w}d\overline{w}\\ =
\frac{1}{2\pi i}\int_{\partial \Omega
}(-\partial _w)^n \frac{1}{z-w}d\overline{w}
=\frac{1}{2\pi i}\int_0^L(-\dot{\gamma }^{-1}\partial _t)^n
\left(\frac{1}{z-\gamma (t)} \right)\overline{\dot{\gamma }}dt\\
=\frac{1}{2\pi i}\int_0^L \frac{1}{z-\gamma (t)}\underbrace{(\partial
  _t\circ \dot{\gamma }^{-1})^n\left(\overline{\dot{\gamma
      }}\right)}_{\mathcal{ O}(1)}dt
=\mathcal{ O}(1)|\ln d(z,\partial \Omega )|,
\end{multline}
still for $z\in \mathrm{neigh\,}(\partial \Omega )\setminus
\overline{\Omega }$. Integrating these estimates, we see that
$\partial _z^nD_\Omega (z)$ is bounded on $\mathbb{C}\setminus
\overline{\Omega }$ for $n\in \Omega $ (which we already knew to be valid away
from a neighborhood of $\partial \Omega $) and hence that $D_\Omega
\in C^\infty (\mathbb{C}\setminus \Omega )$.

\par If instead of $D_\Omega $ we look at
\begin{equation}\label{bref.37}
D_{\Omega ,\psi }(z)=\frac{1}{\pi }\int_\Omega \frac{1}{z-w}\psi (w)L(dw)
\end{equation}
for some $\psi \in C^\infty (\overline{\Omega })$ with $\psi =1$ near
$\partial \Omega $, we still have $D_{\Omega ,\psi }\in C^\infty (
\mathbb{C}\setminus \Omega )$. Indeed, we get very much as in
(\ref{bref.33}),
\begin{multline*}
\partial _zD_{\Omega ,\psi }(z)=\frac{1}{\pi }\int_\Omega \partial
_z\left(\frac{1}{z-w} \right) \psi (w)L(dw)\\
=-\frac{1}{\pi }\iint_\Omega \partial _w\left(\frac{1}{z-w} \right)
\psi (w) \frac{d\overline{w}\wedge dw}{2i}\\
=\frac{1}{2\pi i}\int_\Omega d_w \left(\frac{\psi (w)}{z-w}d\overline{w}
\right)+\frac{1}{\pi }\iint_\Omega \frac{\partial _w\psi }{z-w}
\frac{d\overline{w}\wedge dw}{2i},
\end{multline*}
where the last integral belongs to $C^\infty (\mathbb{C}\setminus \Omega
)$ and the second last integral is equal to 
$\frac{1}{2\pi i}\int_{\partial\Omega}^{}\frac{1}{z-w}d\bar{w}$ and is also in $C^\infty (\mathbb{C}\setminus \Omega
)$ as we have just seen.

\par We finally show that $D_\Omega (z)$, $z\in \Omega $, extends to a
function in $C^\infty (\overline{\Omega })$. Let $\chi \in C_0^\infty
(\mathbb{C})$ be equal to 1 near $\overline{\Omega }$. Then
$f(z)=\frac{1}{\pi }\int_{\mathbb{C}}(z-w)^{-1}\chi (w)L(dw)$ solves
$\partial _{\overline{z}}f=\chi $ and belongs to $C^\infty (\mathbb{C})$. For $z\in \Omega $ we have
$$
D_\Omega (z)=f(z)-\frac{1}{\pi }\int_{\mathbb{C}\setminus \Omega
}\frac{1}{z-w}\chi (w) L(dw).
$$

\par As in the remark above about $D_{\Omega ,\psi }$ in $\mathbb{C}\setminus \overline{\Omega }$ we see that $\frac{1}{\pi }\int_{\mathbb{C}\setminus \Omega
}\frac{1}{z-w}\chi (w) L(dw)$ belongs to $C^\infty (\overline{\Omega
})$. Hence $D_\Omega (z)$ extends from $\Omega $ to a smooth function
on $\overline{\Omega }$.
\end{proof}

This completes the proof of Theorem \ref{maintheorem} in the case 
when $\partial\Omega$ is analytic. 

\par The exponentials in the integrals in (\ref{bref.32}) behave like
Gaussians peaked at $w_\pm (k)$, and hence the integrals are
$\mathcal{ O}(|k|^{-1/2})$ and the contribution from outside any fixed
neighborhood of $\{w_+(k),w_-(k) \}$ is exponentially small. The
proposition implies that $D_\Omega (w)$ is a Lipschitz function and
therefore we modify the integrals by $\mathcal{ O}(|k|^{-1})$ only, if we
replace $D_{\Omega }(w)$ in a neighborhood of $w_\pm (k)$ by $D_\Omega
(w_\pm (k))$. Thus for instance
\begin{multline}\int_{\widetilde{\Gamma }}D_\Omega (w)e^{iu(w,k)}dw=
  D_\Omega
(w_+(k))
\int_{\widetilde{\Gamma }\cap \mathrm{neigh\,}(w_+(k))}e^{iu(w,k)}dw\\
+
D_\Omega
(w_-(k))
\int_{\widetilde{\Gamma }\cap
  \mathrm{neigh\,}(w_-(k))}e^{iu(w,k)}dw+\mathcal{ O}(k^{-1}).
  \label{Do3}
\end{multline}

\begin{remark}\label{remark}
	We now drop the analyticity 
assumption and assume that $\Omega\Subset \mathbb{C}$ is open with 
smooth boundary and strictly convex. We have used the analyticity 
assumption in (\ref{bref.9}), (\ref{bref.10}), where $iu$ is the 
holomorphic extension to a neighbourhood of $\partial \Omega$ of the 
function $kw-\overline{kw}$, $w\in \partial\Omega$. In the merely 
smooth case we let $iu(w,k)$ denote an almost holomorphic extension 
of $\partial \Omega\ni w\mapsto kw-\overline{kw}$ and define $F$ as 
in (\ref{bref.10}), using the modified function $u$. Stokes' formula 
now produces a small error term to be added to (\ref{bref.10}) and as in 
(2.18) in \cite{KlSjSt20b} and the subsequent discussion, we get 
$$f(z,k)=\frac{1}{2i\bar{k}}F(z)+\frac{\pi}{\bar{k}}\left(
e^{-iu(z,k)}(1_{\Omega_{-}}(z)-1_{\Omega_{+}}(z))+
e^{-kz+\overline{kz}}1_{\Omega}(z)\right)$$
$$+\mathcal{O}(\langle z\rangle 
^{-1}|k|^{-\infty}).$$
The discussion after (\ref{bref.10}) goes through with only a minor 
change: In the formula (\ref{bref.25}) for $r$ we have to add a 
remainder $\mathcal{O}(|k|^{-\infty})$. But this does not affect the 
subsequent estimates, and we get Theorem \ref{maintheorem} also in 
the more general case of a smooth boundary. 
\end{remark}

\section{Stationary phase approximation}
\setcounter{equation}{0}

To compute the leading orders in $1/|k|$ of the reflection 
coefficient, we apply a standard stationary phase approximation. 
Since higher order terms in this approximation are needed here, we briefly 
summarize some facts on the approach. 

\subsection{Two term stationary phase expansion}
We have in mind $\int_{\partial\Omega} e^{-kz + \overline{kz}}$ after 
a change of contour from $\partial \Omega$ to $\Gamma$ and choosing a parametrisation. Let $\varphi\in C^\infty (\mbox{neigh} (0, \mathbb{R}))$ satisfy  $\varphi(0) = 0$, $\varphi'(0) = 0$, $\Re\varphi''(0)>0$, $a\in C^\infty(\mbox{neigh}(0, \mathbb{R}))$. Consider 
\begin{equation*}
I(\varphi, a; h) = h ^{-\frac{1}{2}}\int_Ve^{-\varphi(t)/h}a(t)dt, \quad V = \mbox{neigh} (0, \mathbb{R}).  
\end{equation*}
We already know that $I \sim I_0 + I_1h + \dots,$ where $I_0 =\frac{\sqrt{2\pi}}{\sqrt{\varphi''(0)}} $ with the natural choice of the branch of the square root and our problem is to compute $I_1h$. 
Write $\varphi(t) = \varphi_2t^2 + \psi(t)$, %$= \varphi_0(t)+ \psi(t)$
 where $\psi(t) = \mathcal{O}(t^3)$. Put
\begin{equation*}
\varphi(t;s) = \varphi_2t^2+ s\psi, \quad 0\leq s\leq 1.
\end{equation*}
Clearly $I(\varphi, a; h)$ is smooth in $s$ with $\partial^k_sI(\varphi_s, a ; h) = \mathcal{O}(h^{k/2})$, $k = 0, 1, 2, \dots$, hence by a limited Taylor expansion 
\begin{align}
I(\varphi, a; h) &= I(\varphi, a ; h)|_{s=0} + (\partial_s) I(\varphi, a;h)|_{s=0} 
+ \frac{1}{2}(\partial^2_s) I(\varphi, a;h)|_{s=0} + \mathcal{O}(h^{3/2})\nonumber \\
&=I(\varphi_2t^2, a ; h) - I(\varphi_2t^2, a\psi/h ; h) + \frac{1}{2}I(\varphi_2t^2, 
a(\psi/h)^2 ; h) + \mathcal{O}(h^{3/2}).
\label{expandI}
\end{align}
Indeed
\begin{equation}
I(\varphi, \mathcal{O}(t^n); h) = \mathcal{O}(h^{\frac{n}{2}}).
\label{estI}
\end{equation} 
Using again (\ref{estI}) we can replace $a$, $a\frac{\Psi}{h}$ and $a(\frac{\Psi}{h})^2$ by their limited Taylor sums modulo $\mathcal{O}(t^3)$, $\mathcal{O}(\frac{t^5}{h})$ and $\mathcal{O}(\frac{t^{7}}{h^2})$ respectively. \\
Writing $\psi\sim \varphi_3t^3+ \varphi_4t^4 + \dots$, $a\sim a_0+a_1t +a_2t^2 + \dots$ and using also that $I(\varphi_2t^2, t^n; h) = 0$ when $n$ is odd (up to an exponentially small error if $V$ is not symmetric around $t = 0$), we get 
\begin{equation*}
\begin{split}
I(\varphi_2t^2, a; h) &= I (\varphi_2t^2,a_0, h ) +  I (\varphi_2t^2,a_2t^2, h ) + \mathcal{O}(h^2) \\
 &=a_0 I(\varphi_2t^2, 1 ;1) + h a_2I(\varphi_2t^2, t^2;1) + \mathcal{O}(h^2), 
\end{split}
\end{equation*}
and 
\begin{equation*}
\begin{split}
I(\varphi_2t^2, \frac{a \psi}{h}; h) &= I (\varphi_2t^2,(a_0+a_1t)\frac{\varphi_3t^3 + \varphi_4t^4}{h};h) + \mathcal{O}(h^{3/2}) \\
 &=I(\varphi_2t^2, \frac{(a_0\varphi_4+a_1\varphi_3)t^4}{h} ;h) + 
 \mathcal{O}(h^{3/2}) \\
 &=   h(a_0\varphi_4+ a_1\varphi_3) I(\varphi_2t^2, t^4;1)) + 
 \mathcal{O}(h^{3/2}) ,
\end{split}
\end{equation*}
as well as 
\begin{equation*}
\begin{split}
I(\varphi_2t^2, a(\frac{\psi}{h})^2; h) &= I (\varphi_2t^2,\frac{a_0\varphi_3^2t^6}{h^2}, h ) +   \mathcal{O}(h^{3/2}) \\
&= ha_0 \varphi_3^2I(\varphi_2t^2, t^6 ;1) + \mathcal{O}(h^{3/2}). 
\end{split}
\end{equation*}
We now have integrals with quadratic exponent and polynomial amplitudes and up to exponentially small corrections from now on, we integrate over $\mathbb{R}$ instead of $V$.  

\subsection{Reduction to the case of an exact quadratic} 
%when $\varphi_0$ is equal to $g_0(s) = \frac{s^2}{2}$}
Now we consider the reduction to the case of a quadratic exponential, 
\begin{equation*}
I(\varphi_0, t^{2n}; 1) = \int e^{-\varphi_2t^2}t^{2n} dt.  
\end{equation*}
Reparametrise  $t^2 = \frac{\tau^2}{2\varphi_2}$, $t = \frac{1}{(2\varphi_2)^{1/2}}\tau$, and $dt = \frac{1}{\sqrt{2\varphi_2}}d\tau = \frac{1}{\sqrt{\varphi''(0)}}d\tau$ 
 %$\varphi_0(t) = \varphi_2t^2$ so $2\varphi_2 = \varphi''_0(0) = \varphi''(0)$, 
\begin{equation*}
I(\varphi_2t^2,t^{2n};1) = 
(2\varphi_2)^{-n-\frac{1}{2}}I(\frac{\tau^2}{2};\tau^{2n}; 1)  = 
(\varphi''(0))^{-n-\frac{1}{2}}I(\frac{\tau^2}{2}, \tau^{2n}; 1).
\end{equation*}

Integrate by parts when $n\geq 1$:
\begin{equation*}
\begin{split}
I(\tau^2/2, \tau^{2n}; 1)  &= 
\int_{\mathbb{R}}e^{-\tau^2/2}\tau^{2n}d\tau = - \int_{\mathbb{R}} \partial_\tau(e^{-\tau^2/2})\tau^{2n-1}d\tau \\
 &=\int e^{-\tau^2/2}(2n-1)\tau^{2(n-1)}d\tau = (2n-1)I(\tau^2/2, 
 \tau^{2(n-1)}; 1). 
\end{split}
\end{equation*}
In particular 
\begin{equation*}
\begin{split}
I(\tau^2/2,\tau^2; 1) &= I(\tau^2/2, 1; 1) = \sqrt{2\pi},\\
I(\tau^2/2,\tau^4; 1) &= 3I(\tau^2/2, 1; 1) = 3\sqrt{2\pi},\\
I(\tau^2/2,\tau^6; 1) &= 15I(\tau^2/2, 1; 1) = 15\sqrt{2\pi}.
\end{split}
\end{equation*}
We combine the different identities: 
\begin{equation}
\begin{split}
I(\varphi, a; h) &= I(\varphi_2t^2, a; h) - I(\varphi_2t^2, a\frac{\psi}{h}; h) + \frac{1}{2}I(\varphi_2t^2, a(\frac{\psi}{h})^2; h) + \mathcal{O}(h^{\frac{3}{2}})  \\
&=a_0I(\varphi_2t^2, 1; 1) + ha_2I(\varphi_2t^2, t^2; 1) -h(a_0\varphi_4 + 
a_1\varphi_3)I(\varphi_2t^2, t^4; 1)\\
& \quad \quad  + \frac{1}{2}h a_0\varphi_3^2 I(\varphi_2t^2, t^6; 1) + \mathcal{O}(h^{\frac{3}{2}})  \\
 &= \sqrt{2\pi}a(0)\varphi''(0)^{-\frac{1}{2}}+ 
 \sqrt{2\pi}h\left[\frac{1}{2}a''(0)\varphi''(0)^{-\frac{3}{2}} \right.\\
&\quad\quad - \left(\frac{1}{8}a(0)\varphi^{(4)}(0) + \frac{1}{2}a'(0)\varphi^{(3)}(0)\right)\varphi''(0)^{-\frac{5}{2}} \\
& \quad\quad\quad\quad \left. + \frac{5}{24} a(0)\varphi^{(3)}(0)^2\varphi''(0)^{- \frac{7}{2}}\right] + \mathcal{O}(h^{\frac{3}{2}}) .
\end{split}
\label{I0}
\end{equation}
Here we recall that $\varphi_3 = \varphi^{(3)}(0)/6$ and $\varphi_4 = 
\varphi^{(4)}(0)/24$, $a_0 = a(0)$, $a_1 = a'(0)$, $a_2 = \frac{1}{2}a''(0)$. We know that $I(\varphi, a ; h)$ has an asymptotic expansion in integer 
powers of $h$, so the remainder can be improved to 
$\mathcal{O}(h^2)$. In view of the application to 
$\int_{\partial\Omega} e^{-kz + \overline{kz}}dz$, we try to express 
the result in terms of $\varphi/h=:\Phi$, and it then seems 
convenient to replace $I(a, \varphi; h)$ above by $J(a, 
\Phi):=h^{\frac{1}{2}}I(a, \varphi; h) = \int_Ve^{-\Phi(t)}a(t)dt$. 
From (\ref{I0}) we get using $\varphi = h\Phi$, 
\begin{align}
\nonumber \frac{1}{\sqrt{2\pi}}J(\varphi, a)& =   a(0)\Phi''(0)^{-\frac{1}{2}}  + \frac{a''(0)}{2}\Phi''(0)^{-\frac{3}{2}} \\ 
\nonumber& - \left( a(0)\frac{\Phi^{(4)}(0)}{8} + a'(0)\frac{\Phi^{(3)}(0)}{2} \right)\Phi''(0)^{-\frac{5}{2}}   \\
&+  \frac{5}{24}a(0)\Phi^{(3)}(0)^2\Phi''(0)^{-\frac{7}{2}} 
 + \mathcal{O}(h^{2}).
%
%
% \frac{1}{\sqrt{2}}\left(a_0(\Phi_2)^{-\frac{1}{2}} + \frac{a_2}{2}(\Phi_2)^{-\frac{3}{2}} - \left( a_0 \Phi_4 + \frac{3}{4}a_1{\Phi_3} \right)(\Phi_2)^{-\frac{5}{2}} + \frac{5}{32}a_0 \Phi_3^2(\Phi_2)^{-\frac{7}{2}} \right)\\ +\mathcal{O}(h^2).
\label{defJ}
\end{align}
Notice that the first term in the final expression is homogeneous of degree $-\frac{1}{2}$ in $\Phi$, while the following one is homogeneous of degree $-\frac{3}{2}$.

\subsection{Stationary phase approximation for the reflection coefficient}
We now apply the above results to the reflection coefficient (we only discuss the analytic case here, see Remark 
\ref{remark} for a generalization to the smooth  case). 
We also assume in the following that $\Phi$ does not vanish at the 
stationary points.

% We 
% follow the approach from \cite{KlSjSt20} to which we refer for more 
% details.  Let $\partial
% \Omega $ be parametrized by
% $$
% \mathbb{R}/L\mathbb{Z}\ni t\mapsto \gamma (t)\in \mathbb{C},\ L=|\partial
% \Omega |,
% $$
% where $\gamma $ is real-analytic and (for simplicity) $|\dot{\gamma
% }(t)|=1$ . 

In application to $\int_{\partial\Omega}e^{kz-\overline{kz}}$, let 
$[0, L[\ni t \mapsto \gamma(t)$ parametrize the boundary, so 
$\int_{\partial\Omega}e^{kz-\overline{kz}}dz = 
\int_0^Le^{-\Phi(t)}a(t)dt$, with 
\begin{equation}
	\Phi(t) = -(k\gamma(t) - 
\overline{k\gamma(t)}),\quad a(t) = \dot{\gamma}(t).
	\label{Phidef}
\end{equation}
Apply (\ref{defJ}) 
with $\mathcal{O}(h^2) = \mathcal{O}(|k|^{-2})$. 

\begin{remark}\label{rem2}
It remains to choose 
the correct branches of $(\Phi''(t))^{\frac{1}{2}}$ at $t=t_{\pm}$. 
We adapt the notation of \cite{KlSjSt20}, 
$iu(z,k)=kz-\overline{kz}$, $z\in\partial \Omega$. We have 
$\Phi(t)=-iu(\gamma(t),k))=:-iU(t,k)$ which is purely imaginary with 
two non degenerate critical points at $t=t_{+},t_{-}$ corresponding to 
the poles $w_{+},w_{-}$ respectively. By contour deformation we see that the 
stationary phase approximation is still valid with the branch of 
$(\Phi''(t_{\pm}))^{1/2}$ obtained as the limit of 
$(F_{\pm,\epsilon})^{1/2}$, where $F_{\pm,\epsilon}$ is a sequence 
converging to $\Phi''(t_{\pm})$ when $\epsilon\searrow 0$ with the 
property $\Re F_{\pm,\epsilon}>0$. We get 
$(\Phi''(t_{\pm}))^{1/2}=e^{\mp i\pi/4}|U''(t_{\pm},k)|^{1/2}$.
\end{remark}

We get for  (\ref{Do3}) 
\begin{equation}
	\int_{\widetilde{\Gamma }}D_\Omega (w)e^{iu(w,k)}dw
	= 
	\sqrt{2\pi} \sum_{t=t_{+},t_{-}}^{}e^{-\Phi(t)}D_{\Omega}(\gamma(t))
	a(t)(\Phi''(t))^{-\frac{1}{2}}
+\mathcal{O}(|k|^{-1}),
		\label{stationary}
\end{equation}
where $\Phi$ and $a$ are defined in (\ref{Phidef}), and where the 
signs of the roots are chosen as detailed in Remark \ref{rem2}.

The leading term in the reflection coefficient (\ref{eqmt}) 
is due to the term 
$\int_{\Omega}^{}e^{kz-\overline{kz}}L(dz)$. We apply Stokes' 
theorem as before to write this in the form of an integral over 
$\partial \Omega$ and apply a stationary phase approximation, 
\begin{equation}
	\begin{split}
	\frac{i}{2\bar{k}}\int_{\partial\Omega}^{}e^{kz-\overline{kz}}dz
&= 
	\frac{i\sqrt{2\pi }}{2\bar{k}}\sum_{t=t_{+},t_{-}}^{}
	 e^{-\Phi(t)}\left(a(t)(\Phi''(t))^{-\frac{1}{2}} + 
	 \frac{a''(t)}{2}(\Phi''(t))^{-\frac{3}{2}}\right.\\
	 &- \left( \frac{a(t)}{8} 
	 \Phi^{(4)}(t) + 
	 \frac{1}{2}a'(t){\Phi^{(3)}(t)} 
	 \right)(\Phi''(t))^{-\frac{5}{2}}\\
	 &\quad \quad\quad \left.+ 
	 \frac{5}{24}a(t) \Phi^{(3)}(t)^2(\Phi''(t))^{-\frac{7}{2}} \right) 
+\mathcal{O}(|k|^{-3}).
\end{split}
	\label{stationary2}
\end{equation}
Again $\Phi$ and $a$ are defined in (\ref{Phidef}), and the sign of 
the roots are chosen as explained in Remark \ref{rem2}. With 
(\ref{stationary}) and (\ref{stationary2}) we get for the reflection 
coefficient (\ref{eqmt}) relation (\ref{eqmt2}). 

\subsection{Example: Characteristic function of the unit disk}

In general, we cannot compute explicitly $D_\Omega (z)$, but in
the special case of the unit disc, we have
$$
D_{D(0,1)}(z)=\begin{cases}\overline{z},\ |z|\le 1,\\
1/z,\ |z|\ge 1.
\end{cases}
$$

For the stationary phase approximation, we parametrize $\partial 
\Omega$ via $\gamma(t)=e^{it}$, $t\in [0,2\pi[$. Writing 
$k=|k|e^{i\vartheta}$, $\vartheta\in \mathbb{R}$, we have 
$\Phi=-2i|k|\sin(t+\vartheta)$ and $a=ie^{it}$ in (\ref{Phidef}). The 
critical points are $t^{\pm}=\pm \frac{\pi}{2}-\vartheta$ (the 
relation to the previously introduced $t_{\pm}$ is $t^{\pm}:=t_{\mp}$). We have 
$\Phi(t^{\pm})=-\Phi''(t^{\pm})=\Phi^{(4)}(t^{\pm})=\mp2i|k|$, 
whereas $\Phi'(t^{\pm})=\Phi^{(3)}(t^{\pm})=0$ and 
$a(t^{\pm})=-a''(t^{\pm})=\mp 
e^{-i\vartheta}$. 
This implies for the right hand 
side of (\ref{stationary2} 
\begin{equation} 
	\frac{i}{2\bar{k}}\int_{\partial\Omega}^{}e^{kz-\overline{kz}}dz 
	=\frac{\sqrt{\pi}}{|k|^{3/2}}\left(\sin(2|k|-\pi/4)+\frac{3}{16|k|}\cos(2|k|-\pi/4)\right)+\mathcal{O}(|k|^{-7/2}).
	\label{D2}
\end{equation}

Similarly we get for (\ref{stationary}) 
\begin{equation}
		\int_{\widetilde{\Gamma }}D_\Omega (w)e^{iu(w,k)}dw
	= \sqrt{\frac{\pi}{|k|}}2i\cos(2|k|-\pi/4)
+\mathcal{O}(|k|^{-1})
	\label{D1}.
\end{equation} 
Thus we get for the leading terms of the reflection coefficient 
(\ref{eqmt2}) the 
result conjectured in \cite{KlSjSt20} (note that the formula for 
$R/2$ was given there),
\begin{equation}
	R \approx \frac{2}{\sqrt{\pi 
	|k|^{3}}}\left(\sin(2|k|-\pi/4)-\frac{5}{16|k|}\cos(2|k|-\pi/4)\right)
	\label{Rasym}.
\end{equation}
Note that the conjectured error term is smaller than what is proven 
in this paper. 

\section{Conclusion}
In this paper, we have presented asymptotic relations for large $|k|$ 
for the solutions to the Dirac system (\ref{dbarphi}) subject to the 
asymptotic conditions (\ref{Phisasym}). Previous results for 
potentials being the characteristic function of a compact domain with 
smooth convex boundary have been improved and extended to the 
reflection coefficient, the scattering data in the context of an 
integrable systems approach to the DS II equation. The results are 
now extended to  $\mathcal{O}(|k|^{-5/2})$ which makes it possible  
to apply these 
formulae to complement numerical computations in order to get the 
reflection coefficient for all $k\in\mathbb{C}$ with the same 
precision as discussed in \cite{KlSjSt20}. This allows to treat the 
reflection coefficient with a \emph{hybrid 
approach} combining numerical and analytical results.

An interesting question in the context of EIT would be to extend the 
results of this paper to a compact domain with cavities. Since in 
applications to the human body, the organs of a patient are of 
essentially constant conductivity, this corresponds to a situations 
of a domain with compact support and cavities all of which have 
smooth compact boundaries. The boundary data at the cavities are a 
consequence of the conductivity in the interior. It will be the subject of further work to 
adapt the present formulae to this case. An interesting question to 
be addressed is also to find the optimal error term in Theorem 
\ref{maintheorem}.


\begin{thebibliography}{99}
    \bibitem{AF} M.J. Ablowitz, A.S. Fokas, On the inverse 
    scattering transform of multidimensional nonlinear evolution 
    equations related to first order systems in the plane, J. Math 
    Phys. 25 no 8 (1984), 2494-2505.   
    
    \bibitem{AKMM}Assainova, O., Klein, C., McLaughlin, K. D. and 
	Miller, P. D., A Study of the Direct Spectral Transform for the Defocusing Davey-Stewartson II Equation the Semiclassical Limit. Comm. Pure Appl. Math., 72: 1474-1547 (2019).

% \bibitem{APRS}K. Astala, L. P\'aiv\'arinta, J.M. Reyes, S. Siltanen, 
% Nonlinear Fourier analysis for discontinuous conductivities: 
% Computational results,  Journal of Computational Physics 276,  74Ñ1§791  (2014)
% 
% \bibitem{AP}K. Astala K and L. P\"aiv\"arinta. Calder\'on's inverse 
% conductivity problem in the plane, Ann. Math. 163 265-299 (2006)

%\bibitem{AsMuPaSi2010}K. Astala, J. L. Mueller, L. P\"{a}iv\"{a}rinta, and S. Siltanen.  Numerical computation of complex geometrical optics solutions to the  conductivity equation. {\it Appl. Comput. Harmon. Anal.} {\bf 29} no. 1, 2-17 (2010).
%5.
%\bibitem{Atkinson1989}K. Atkinson, An Introduction to Numerical Analysis (Wiley, second edition, 1989).
\bibitem{BC}R. Beals and R. Coifman,  Multidimensional inverse 
scattering and nonlinear PDE Proc. Symp. Pure Math.
(Providence: American Mathematical Society)  43, 45-70 (1985)

\bibitem{Brown}R.M. Brown,  Estimates for the scattering map associated with a two-dimensional first-order
system. J. Nonlinear Sci. 11, no. 6, 459-471 (2001)

\bibitem{perrysol}R. Brown and P. Perry, Soliton solutions and their 
(in)stability for the focusing Davey-Stewartson II equation, 
Nonlinearity \textbf{31}(9) 4290 doi.org/10.1088/1361-6544/aacc46 (2018)

\bibitem{BU}R.M. Brown,  G.A. Uhlmann, 
Communications in partial differential equations 22 (5-6), 1009-1027 
(1997)

\bibitem{calderon}A.P. Calder\'on, On inverse boundary value problem.  
Seminar on Numerical Analysis and its Applications to Continuum Physics (Rio de Janeiro, 1980) pp 65-73 (Soc. Brasil. Mat.)

\bibitem{DiSj99} M.\ Dimassi, J.\ Sj\"ostrand, {\it Spectral
asymptotics in the semi-classical limit,} London Math. Soc. Lecture
Notes Series 269, Cambridge University Press
1999. 

\bibitem{Fok}A.S. Fokas, On the Inverse Scattering of First Order Systems in the Plane Related to Nonlinear Multidimensional Equations, Phys. Rev. Lett. 51, 3-6 (1983)

\bibitem{FA} A.S. Fokas and M.J. Ablowitz, On a Method of Solution for a Class of Multi-Dimensional
Nonlinear Evolution Equations, Phys. Rev. Lett. 51, 7-10 (1983)


\bibitem{kenig}C. Kenig, J. Sj\"ostrand, G. Uhlmann. The Calder\'on  problem with partial data. Annals of Mathematics 165 (2007), 567-591.

\bibitem{GP}A. G. Gurevich, L. P. Pitaevskii, Non stationary structure of a collisionless shock waves, JEPT Letters 17 (1973), 193-195.

% \bibitem{Kis}O.M. KISELEV, Asymptotic of a soliton-free of the Davey-Stewartson II equation, Diff. Uravn. 33 no 6 (1997), 812-819. Translation : Asymptotic behavior of the solution of a system of Davey-Stewartson II equations in the solution-free case, Differential Equations33 (1997),
% 815-823.
\bibitem{Hor}L. H\"ormander, An introduction to complex analysis in 
several variables, Third edition, North-Holland Mathematical Library 
7 (North-Holland Publishing Co., Amsterdam, 1990)

\bibitem{KM}C. Klein and K. McLaughlin, Spectral approach to D-bar 
problems, Comm. Pure Appl. Math., DOI: 10.1002/cpa.21684 (2017) 

\bibitem{KMS}C. Klein, K. McLaughlin and N. Stoilov, Spectral 
approach to semi-classical d-bar problems with Schwartz class 
potentials, Physica D: Nonlinear Phenomena DOI: 10.1016/j.physd.2019.05.006 (2019)


\bibitem{KR14}C. Klein and K. Roidot, Numerical Study of the semiclassical 
limit of the Davey-Stewartson II equations, Nonlinearity 27, 2177-2214 (2014). 

\bibitem{KSbook}C.~Klein and J.-C.~Saut, Nonlinear dispersive equations --- 
	Inverse Scattering and PDE methods, 
	Applied Mathematical Sciences 209 (Springer, 2002)
    

\bibitem{KlSjSt20}C.\ Klein, J.\ Sj\"ostrand, N.\ Stoilov, {\it Large
    $| k |$ behavior of complex geometric optics solutions to d-bar
    problems,} accepted for publication in Comm. Pure Appl. Maths. 
	\url{https://arxiv.org/abs/2009.06909}.
	
\bibitem{KlSjSt20b}C. Klein, J. Sj\"ostrand, N. Stoilov, Large $|k|$ behavior of  d-bar problems for domains with a
smooth boundary, 
 `"Partial Differential 
Equations, Spectral Theory, and Mathematical Physics: The Ari Laptev 
Anniversary Volume' (edited by Pavel Exner, Rupert L. Frank, Fritz 
Gesztesy, Helge Holden and Timo Weidl), EMS Press,  https://arxiv.org/abs/2009.06909



\bibitem{KS}C. Klein and N. Stoilov, A numerical study of blow-up mechanisms for Davey-Stewartson II systems, Stud.
Appl. Math., DOI : 10.1111/sapm.12214 (2018)

\bibitem{KS19}C. Klein and N. Stoilov, Numerical scattering for the 
defocusing Davey-Stewartson II equation for initial data with compact support, Nonlinearity 32 (2019) 4258-4280 

\bibitem{KnMuSi2004}
K. Knudsen, J. L. Mueller, S. Siltanen.  Numerical solution method for the d-bar equation in the plane.  {\it J. Comput. Phys.} { 198} no. 2, 500-517 (2004).

\bibitem{MS}J.L. Mueller and S. Siltanen. Linear and Nonlinear 
Inverse Problems with Practical Applications, SIAM, 2012. 

\bibitem{IMNS2014}
P. Muller, D. Isaacson, J. Newell, and G. Saulnier.   A Finite Difference Solver for the D-bar Equation.  {\it Proceedings of the 15th International Conference on Biomedical Applications of Electrical Impedance Tomography, Gananoque, Canada}, 2014.

\bibitem{NRT}A. I. Nachman, I. Regev, and D. I. Tataru, A nonlinear 
Plancherel theorem with applications to global well-posedness for the 
defocusing Davey-Stewartson equation and to the inverse boundary value problem of Calderon,  {\it Invent. Math.} {\bf 220}, 395–451 (2020).



\bibitem{Perry2012}
P. Perry.  
Global well-posedness and long-time asymptotics for the defocussing Davey-Stewartson II equation in $H^{1,1}(\mathbb{R}^{2})$.  J. Spectr. Theory {\bf 6} (2016), no. 3, pp. 429–481.

 \bibitem{Su1}\textsc{L.Y. Sung}, {\it An inverse scattering transform for the Davey-Stewartson equations. I}, J. Math. Anal. Appl. { 183} (1) (1994), 121-154. 
 \bibitem{Su2}\textsc{L.Y. Sung},  {\it An inverse scattering transform for the Davey-Stewartson equations. II}, J. Math. Anal. Appl. { 183} (2) (1994), 289-325. 
 \bibitem{Su3}\textsc{L.Y. Sung},  {\it An inverse scattering 
 transform for the Davey-Stewartson equations. III}, J. Math. Anal. 
 Appl. { 183} , 477-494  (1994)
\bibitem{Uhl}G. Uhlmann. Electrical impedance tomography and 
Calder\'on's problem. Inverse Problems,
25(12):123011, 2009.
% \bibitem{Vainikko}
% G. Vainikko.  Multidimensional weakly singular integral equations.  {\it Lecture Notes in Mathematics} { 1549}, Springer (1993).

\end{thebibliography}
 \end{document}